\def \ent {\mathrm{ent}}
\newtheorem{theorem}{Theorem}[section]
\newtheorem{corollary}[theorem]{Corollary}
\newtheorem{example}[theorem]{Example}
\newtheorem{remark}[theorem]{Remark}
\newtheorem{lemma}[theorem]{Lemma}
\newtheorem{problem}[theorem]{Problem}
\newtheorem{definition}[theorem]{Definition}
\newtheorem{conjecture}{Conjecture}
\numberwithin{equation}{section}
\DeclareMathOperator{\sign}{sign\,\!}
\DeclareMathOperator{\comp}{\mathcal{C}omp\,\!}
\DeclareMathOperator{\card}{card\,\!}
\begin{document}
\title[Sharp Bohnenblust--Hille constants]{Towards sharp Bohnenblust--Hille
constants}
\author[D. Pellegrino and E. Teixeira]{Daniel Pellegrino and Eduardo V.
Teixeira}
\address{Departamento de Matem\'{a}tica \\
\indent Universidade Federal da Para\'{\i}ba \\
\indent 58.051-900 - Jo\~{a}o Pessoa, Brazil.}
\email{pellegrino@pq.cnpq.br and dmpellegrino@gmail.com}
\address{Departamento de Matem\'{a}tica \\
\indent Universidade Federal do Cear\'{a} \\
\indent Fortaleza, Cear\'{a}, Brazil.}
\email{teixeira@mat.ufc.br}
\thanks{The authors are supported by CNPq.}
\subjclass[2010]{11Y60, 47H60.}
\keywords{ Bohnenblust--Hille inequality, interpolation, mixed $\left( \ell
_{1},\ell _{2}\right) $-Littlewood inequality.}
\maketitle

\begin{abstract}
We investigate the optimality problem associated with the best constants in
a class of Bohnenblust--Hille type inequalities for $m$--linear forms. While germinal estimates indicated an exponential growth, in this work we provide strong evidences to the conjecture that the sharp constants in the classical Bohnenblust--Hille inequality are universally
bounded, irrespectively of the value of $m$; hereafter referred as the
\textit{Universality Conjecture}. In our approach, we introduce the {notions
of entropy and complexity}, designed to measure, to some extent, the
complexity of such optimization problems.  We show that the notion of entropy is critically
connected to the Universality Conjecture; for instance, that if the entropy
grows at most exponentially with respect to $m$, then the optimal constants of the $m$%
--linear Bohnenblust--Hille inequality for real scalars are indeed bounded universally in $m$.
It is likely that indeed the entropy grows as $4^{m-1}$, and in this
scenario, we show that the optimal constants are precisely $2^{1-\frac{1}{m}%
} $. In the bilinear case, $m=2$, we
show that any extremum of the Littlewood's $4/3$-inequality has entropy $4$
and complexity $2$, and thus we are able to classify all extrema of the problem. We also prove that, for any mixed $\left( \ell _{1},\ell _{2}\right) $%
--Littlewood inequality, the entropy do grow exponentially and the sharp
constants for such a class of inequalities are precisely $(\sqrt{2})^{m-1}$.
In addition to the {notions of entropy and complexity}, the approach we
develop in this work makes decisive use of a family of strongly
non-symmetric $m$--linear forms, which has further consequences to the
theory, as we explain herein.
\end{abstract}

\tableofcontents

\section{Introduction}

Let $\mathbb{K}$ denote the real or the complex scalar field. Given a
positive integer $m$, the Bohnenblust--Hille inequality \cite{bh} assures
the existence of a constant $B_{\mathbb{K},m}\geq 1$ such that
\begin{equation}
\left( \sum_{j_{1},\cdots,j_{m}=1}^{\infty }\left\vert
T(e_{j_{1}},\cdots,e_{j_{m}})\right\vert ^{\frac{2m}{m+1}}\right) ^{\frac{m+1%
}{2m}}\leq B_{\mathbb{K},m}\left\Vert T\right\Vert ,  \label{u88}
\end{equation}%
for all continuous $m$--linear forms $T\colon c_{0}\times \cdots \times
c_{0}\rightarrow \mathbb{K}$. Restricting \eqref{u88} to the case $m=2$ one
recovers the famous Littlewood's $4/3$ inequality \cite{LLL}.
Bohnenblust--Hille inequality is an elegant, far-reaching pearl of classical
analysis; for connections with other fields of research, we refer to \cite%
{boas, defseip, montanaro, seip} and references therein.

The investigation of the sharp constants in the Bohnenblust--Hille
inequality, namely the optimization problem
\begin{equation*}
B_{\mathbb{K},m}:=\inf \left\{ \left( \sum_{j_{1},\cdots ,j_{m}=1}^{\infty
}\left\vert T(e_{j_{1}},\cdots ,e_{j_{m}})\right\vert ^{\frac{2m}{m+1}%
}\right) ^{\frac{m+1}{2m}}\!\!\!\!\!,\text{ among all \textit{m}--linear
forms }T,\text{ with }\Vert T\Vert =1\right\}
\end{equation*}%
is fundamental in many aspects of the theory, and a rather challenging
mathematical problem. Following classical terminology, a minimum $T$ for the
minimization problem above is called an extremum, or an $m$--linear extremum
form. It has been known since the seminal work of Bohnenblust--Hille, \cite%
{bh}, that
\begin{equation*}
B_{\mathbb{K},m}\leq m^{\frac{m+1}{2m}}\left( \sqrt{2}\right) ^{m-1},
\end{equation*}%
for any $m\geq 1$. It was just quite recently that upper bounds for the
Bohnenblust--Hille inequality were refined, see for instance \cite{bayart}
and references therein. By means of interpolations, that is, clever usage of
H\"{o}lder inequality for mixed sums, and with the knowledge of optimal
constants in the Khinchin inequality --- \cite{haa, szarek} for real scalars
and \cite{ko} for complex scalars --- it was proved the existence of
constants $\kappa _{1},\kappa _{2}>0$ such that
\begin{eqnarray}
B_{\mathbb{R},m} &<&\kappa _{1}\cdot m^{\frac{2-\log 2-\gamma }{2}}, \\
B_{\mathbb{C},m} &<&\kappa _{2}\cdot m^{\frac{1-\gamma }{2}},
\end{eqnarray}%
where $\gamma $ is the Euler-Mascheroni constant, and since
\begin{equation*}
m^{\frac{2-\log 2-\gamma }{2}}\approx m^{0.36482};\quad m^{\frac{1-\gamma }{2%
}}\approx m^{0.21139},
\end{equation*}%
the sharp constants grow (at most) sub-linearly with respect to $m$. The
problem of finding good lower bounds for $B_{\mathbb{K},m}$ turns out to be
also delicate. Despite many analytic and numeric attempts, the up-to-now
best known lower bounds for $B_{\mathbb{R},m}$ are still $2^{1-\frac{1}{m}}$%
. In the complex case, nothing is known besides the trivial estimate $B_{%
\mathbb{C},m}\geq 1$.

Recently, the Bohnenblust--Hille inequality has been proved to be part of a
much more general class of inequalities, see for instance \cite{a}. More
than mere generalizations, these broader classes of inequalities reveal
importance nuances hidden in the original one. In particular, significant
advances in the theory can be acquired by the following extended version of
the inequality, see \cite{a}:

\begin{theorem}
\label{ujm}Let $m\geq 2$ be a positive integer and $(q_{1},\cdots,q_{m})\in %
\left[ 1,2\right] ^{m}.$ The following assertions are equivalent:

\begin{itemize}
\item[(A)] $(q_{1},\cdots,q_{m})$ satisfies
\begin{equation}
\frac{1}{q_{1}}+\cdots +\frac{1}{q_{m}}\leq \frac{m+1}{2}.  \label{ppoo}
\end{equation}

\item[(B)] \textit{There exists a constant} $C_{\left(
q_{1},\cdots,q_{m}\right) m}^{\mathbb{K}}\geq 1$ \textit{such that}
\begin{equation}
\left( \sum_{j_{1}=1}^{\infty }\left( \sum_{j_{2}=1}^{\infty }\left( \cdots
\left( \sum_{j_{m}=1}^{\infty }\left\vert
T(e_{j_{1}},\cdots,e_{j_{m}})\right\vert ^{q_{m}}\right) ^{\frac{q_{m-1}}{%
q_{m}}}\cdots \right) ^{\frac{q_{2}}{q_{3}}}\right) ^{\frac{q_{1}}{q_{2}}%
}\right) ^{\frac{1}{q_{1}}}\leq C_{\left( q_{1},\cdots,q_{m}\right) m}^{%
\mathbb{K}}\left\Vert T\right\Vert,  \label{g33}
\end{equation}%
\textit{for all continuous }$m$\textit{--linear forms }$T\colon c_{0}\times
\cdots \times c_{0}\rightarrow \mathbb{K}$.
\end{itemize}
\end{theorem}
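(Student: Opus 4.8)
The plan is to prove the two implications separately, with essentially all of the analytic content concentrated in (A) $\Rightarrow$ (B).

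For the necessity (B) $\Rightarrow$ (A) I would argue by contradiction using a family of unimodular test forms. Using the Kahane--Salem--Zygmund inequality, for each $n$ one obtains signs $\varepsilon_{j_{1}\cdots j_{m}}=\pm 1$ so that the $m$--linear form $T_{n}\colon \ell_{\infty}^{n}\times \cdots \times \ell_{\infty}^{n}\rightarrow \mathbb{K}$ with these coefficients satisfies $\Vert T_{n}\Vert \leq \kappa \, n^{(m+1)/2}$ for a constant $\kappa =\kappa (m)$, while $|T_{n}(e_{j_{1}},\dots ,e_{j_{m}})|=1$ for all indices. A direct evaluation of the mixed norm of the all--ones $n\times \cdots \times n$ array, carried out from the innermost index outward, shows that the left--hand side of \eqref{g33} equals $n^{1/q_{1}+\cdots +1/q_{m}}$. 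Plugging $T_{n}$ into (B) would then give $n^{\sum 1/q_{i}}\leq C\,\kappa \, n^{(m+1)/2}$ for every $n$, which forces \eqref{ppoo}. The only points needing care here are the exact computation of the iterated sum and the invocation of the sharp KSZ exponent $(m+1)/2$.

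For the sufficiency (A) $\Rightarrow$ (B) I would reduce to the extreme points of the admissible region and interpolate. Writing $x_{i}=1/q_{i}\in \lbrack 1/2,1]$, condition \eqref{ppoo} defines the polytope $P=\{x\in \lbrack 1/2,1]^{m}:\sum x_{i}\leq (m+1)/2\}$. Two elementary observations organize the argument: (i) the mixed norm in \eqref{g33} is non--increasing in each exponent $q_{i}$, so by first replacing a given admissible $(q_{1},\dots ,q_{m})$ by a coordinatewise--smaller exponent vector lying on the boundary hyperplane $\sum x_{i}=(m+1)/2$, it suffices to establish \eqref{g33} on that boundary; and (ii) the intersection of this hyperplane with the cube $[1/2,1]^{m}$ is exactly the convex hull of the $m$ vertices having one coordinate equal to $1$ and the remaining $m-1$ equal to $1/2$, since the hyperplane meets no edge of the cube in its interior. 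These vertices correspond to the exponent vectors obtained by permuting $(1,2,\dots ,2)$. At each such vertex, \eqref{g33} is precisely a mixed $(\ell _{1},\ell _{2})$--Littlewood inequality, which I would prove by induction on $m$ via Khinchin's inequality with its optimal $L_{1}$ constant $\tfrac{1}{\sqrt{2}}$ (\cite{haa,szarek,ko}), using Minkowski's inequality to place the single $\ell _{1}$--slot in the prescribed position; this yields the constant $(\sqrt{2})^{m-1}$.

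It then remains to transfer these vertex inequalities to an arbitrary boundary point. For this I would use the H\"{o}lder inequality for mixed--norm sequence spaces: if $1/q_{i}=\sum_{k}\theta _{k}/q_{i}^{(k)}$ with $\theta _{k}\geq 0$ and $\sum_{k}\theta _{k}=1$, then $\Vert a\Vert _{(q_{1},\dots ,q_{m})}\leq \prod_{k}\Vert a\Vert _{(q_{1}^{(k)},\dots ,q_{m}^{(k)})}^{\theta _{k}}$, a fact proved by iterating the ordinary H\"{o}lder inequality from the innermost sum outward. Applying this with $a=(T(e_{j_{1}},\dots ,e_{j_{m}}))$ and the $q^{(k)}$ ranging over the permutation vertices expresses the admissible point as a convex combination of base cases and produces a finite constant, bounded by $(\sqrt{2})^{m-1}$. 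The main obstacle, and the genuine content of the implication, is the base case: establishing the mixed $(\ell _{1},\ell _{2})$--Littlewood inequalities with correct control of the position of the $\ell _{1}$--index, together with verifying that the mixed--norm H\"{o}lder interpolation is valid in the required multi--term anisotropic form; once these are secured, the remainder of the argument is bookkeeping.
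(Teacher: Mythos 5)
This theorem is not proved in the paper: it is quoted from \cite{a}, and the paper only remarks that the implication (B)$\Rightarrow$(A) is obtained from unimodular forms via the Kahane--Salem--Zygmund inequality. Your proposal correctly reconstructs exactly that standard argument --- KSZ plus the computation $n^{\sum 1/q_i}$ for the necessity, and for the sufficiency the reduction of the polytope $\sum 1/q_i\leq \frac{m+1}{2}$ to the simplex spanned by the permutations of $(1,2,\cdots,2)$, the mixed $(\ell_1,\ell_2)$--Littlewood base cases via Khinchin, and the Benedek--Panzone mixed-norm H\"older interpolation --- so it matches the proof the paper relies on, and I see no gap in it.
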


\bigskip

When condition (A) is verified, $(q_{1},\cdots,q_{m})\in \left[ 1,2\right]
^{m}$ is said to be a Bohnenblust--Hille exponent. Hereafter, we will
essentially deal with the case $\mathbb{K}=\mathbb{R}$. It is particularly
interesting for our our purposes the mixed $\left( \ell _{1},\ell
_{2}\right) $--Littlewood inequalities, which refers, in inequality (\ref%
{g33}), to the exponents
\begin{equation*}
\left\{ \left( 1,2,\cdots ,2\right) ,\left( 2,1,\cdots ,2\right) ,\cdots
\left( 2,2,\cdots ,1\right) \right\} .
\end{equation*}%
Henceforth the mixed $\left( \ell _{1},\ell _{2}\right) $-Littlewood
inequalities comprise the existence of positive constants
\begin{equation*}
C_{\left( 1,2,\cdots ,2\right) m},\,C_{\left( 2,1,\cdots ,2\right)
m},\,\cdots ,\,C_{\left( 2,2,\cdots ,2,1\right) m}
\end{equation*}%
such that
\begin{equation}
\left\{
\begin{array}{l}
\displaystyle\sum\limits_{j_{i}=1}^{\infty }\left( \sum_{j_{1},\cdots
,j_{i-1},j_{i+1},\cdots ,j_{m}=1}^{\infty }\left\vert U(e_{j_{1}},\cdots
,e_{j_{m}})\right\vert ^{2}\right) ^{\frac{1}{2}}\leq C_{\left( 1,2,\cdots
,2\right) m}\left\Vert U\right\Vert , \\
\vdots \\
\displaystyle\left( \sum\limits_{j_{1},\cdots j_{i-1},j_{i+1},\cdots
j_{m}=1}^{\infty }\left( \sum_{j_{i}=1}^{\infty }\left\vert
U(e_{j_{1}},\cdots ,e_{j_{m}})\right\vert \right) ^{2}\right) ^{1/2}\leq
C_{\left( 2,2,\cdots ,2,1\right) m}\left\Vert U\right\Vert ,%
\end{array}%
\right.  \label{eh}
\end{equation}%
for all continuous $m$--linear forms $U\colon c_{0}\times \cdots \times
c_{0}\rightarrow \mathbb{R}$, and all $i=1,\cdots, m$. For the $\left( \ell
_{1},\ell _{2}\right) $--Littlewood inequalities, it has been proved that
\begin{equation*}
C_{\left( 1,2,\cdots ,2\right) m}=C_{\left( 2,1,2,\cdots ,2\right) m}=\left(
\sqrt{2}\right) ^{m-1}
\end{equation*}%
gives the sharp constant. While the proof of this fact, see \cite{daniel},
relies on the powerful analysis of Haagerup \cite{haa} for finding the best
constants of the Khinchin inequality, the strategy cannot be carried out for
the other exponents, see \cite{dia}. For instance, in the case of the
multiple exponent $\left( 2,\cdots ,2,1\right) $, previous methods for
finding optimal constants simply yield that $\sqrt{2}$ is a lower bound.

In this paper, through a novel approach, we finally show that the optimal
constants of all the mixed $\left( \ell _{1},\ell _{2}\right) $--Littlewood
inequalities are indeed $\left ( \sqrt{2}\right)^{m-1}$, that is,
\begin{equation}
C_{\left( 1,2,\cdots ,2\right) m} = C_{\left( 2,1,2, \cdots ,2\right)m} =
\cdots = C_{\left( 2,2,\cdots 2,1\right) m}=\left( \sqrt{2}\right)^{m-1}.
\label{ki}
\end{equation}

Despite of the infinite-dimensional nature of the Bohnenblust--Hille
inequalities, that is, the $m$--linear forms act on $c_{0}$, it is quite
revealing to note that all $m$--linear extrema of the inequalities above
mentioned are composed by precisely $4^{m-1}$ monomials. Such observation
raises up the following problem, which seems to play a central role in the
theory: \textit{\ how many monomials are needed to create an }$m$\textit{%
--linear extremum form, i.e., an $m$--linear form which makes the optimal
constant of the Bohnenblust--Hille inequality to be attained?}

To handle this problem we introduce the notion of entropy (formally defined
in the next section) as the minimal number of monomials needed to assemble
an extremum of the respective inequality. For instance, the entropy of the
exponents of the class of mixed $\left( \ell _{1},\ell _{2}\right) $%
--Littlewood inequalities will be less than or equal to $4^{m-1}$. We expect
that the entropy of the classical Bohnenblust--Hille inequality behaves
similarly. A stunning implication of such statement is that the optimal
constants of the Bohnenblust--Hille inequality would then be uniformly
bounded with respect to $m$. Indeed, we will prove in this work that,
restricted to $m$--linear forms composed by the combination of up to $%
4^{m-1} $ monomials, the optimal constants of the Bohnenblust--Hille
inequality for real scalars are precisely $2^{1-\frac{1}{m}}.$ This is
direct evidence to support the following striking conjecture:

\medskip

\noindent \textbf{Universality Conjecture.} \textit{The optimal constants in
the Bohnenblust--Hille inequality are universally bounded, irrespectively of
the value of $m$. In the real case, the best constants should be precisely $%
2^{1-\frac{1}{m}}.$ }

\medskip

Our approach to establish \eqref{ki} makes decisive use of highly
non-symmetric $m$--linear forms. As a possible predicament, the most
efficient tools known up-to-now to produce upper estimates for the
Bohnenblust--Hille constants, namely interpolations, are probably not suited
to reach the sharp estimates. We elaborate such considerations in the last
section of this article.

\section{The notions of entropy {and complexity}}

The optimality problem for the classical Littlewood's $4/3$ inequality, i.e.
the case $m=2$ in the Bohnenblust--Hille inequality, is relatively well
understood in the literature; the optimal constant satisfying (\ref{u88})
for the case of real scalars is $\sqrt{2}$. Furthermore, it is attained by
the bilinear form
\begin{equation}
T_{2}(x,y)=x_{1}y_{1}+x_{1}y_{2}+x_{2}y_{1}-x_{2}y_{2}  \label{att L}
\end{equation}%
--- a simple-looking $2$--linear form comprised of only $4$ monomials.
Besides, essentially no other significantly different extremum is known.
There is an obvious, empirical relation between the difficulty of
establishing the sharp constants in the Bohnenblust--Hille and the algebraic
complexity of prospective extrema. Hence, the following definition is
suitable for the purposes of this study and, we deem, it worths further
investigation.

\begin{definition}
Let $m$ be a positive integer and $\left( q_{1},\cdots,q_{m}\right) $ be a
Bohnenblust--Hille exponent. The entropy of $\left(
q_{1},\cdots,q_{m}\right) $ is defined as%
\begin{equation*}
\ent_{m}^{\mathbb{K}}\left( q_{1},\cdots,q_{m}\right) =\inf
\{card(i_{1},\cdots,i_{m}):\alpha _{i_{1},\cdots,i_{m}}\neq 0\},
\end{equation*}%
where this infimum is taken over all continuous $m$--linear forms $T \colon
c_{0}\times \cdots \times c_{0}\rightarrow \mathbb{K}$ defined by%
\begin{equation*}
T\left( x^{(1)},\cdots,x^{(m)}\right)
=\sum\limits_{i_{1},\cdots,i_{m}=1}^{\infty }\alpha
_{i_{1},\cdots,i_{m}}x_{i_{1}}^{(1)}\cdots x_{i_{m}}^{(m)}
\end{equation*}%
such that the optimal constants of the Bohnenblust--Hille inequality with
multiple exponents $\left( q_{1},\cdots,q_{m}\right) $ is attained by $T$.
\end{definition}

The entropy, $\ent_{m}^{\mathbb{K}}\left( q_{1},\cdots,q_{m}\right) $,
measures, henceforth, the minimal number of monomials necessary to assemble
an extremal $m$--linear form, for which the optimal constant satisfying (\ref%
{g33}) is reached. Since we will focus our attention in the case of real
scalars, hereafter we denote $\ent_{m}^{\mathbb{R}}\left(
q_{1},\cdots,q_{m}\right) $ just by $\ent_{m}\left(
q_{1},\cdots,q_{m}\right).$ We remark that for $m>2$, it is not known in
general the existence of extrema. In this untoward case, we define $\ent%
_{m}^{\mathbb{K}}\left( q_{1},\cdots,q_{m}\right) =\infty.$

Our earlier discussion regarding the explicit $2$-form extremum for the
Littlewood's $4/3$ inequality, \eqref{att L}, reads in the formalism of
entropy as
\begin{equation*}
\ent_{2}\left( \frac{4}{3},\frac{4}{3}\right) \leq 4,
\end{equation*}%
and it is in fact simple to verify that equality holds, i.e., $\ent%
_{2}\left( \frac{4}{3},\frac{4}{3}\right) =4$. By similar reasoning, one
deduces that%
\begin{equation*}
\ent_{2}\left( 1,2\right) =\ent_{2}\left( 2,1\right) =4.
\end{equation*}

By obvious reason, $\ent_{1}\left( 1\right) =1.$ We will prove that if the
entropies grow like $4^{m-1}$ (or at least exponentially) with respect to $m$%
, then the optimal $m$--linear Bohnenblust--Hille constants, for the case of
real scalars, should then be bounded (precisely $2^{1-\frac{1}{m}}$ if the
entropies of the Bohnenblust--Hille exponents are $4^{m-1})$, confirming
henceforth the Conjecture regarding sharp estimates for such class of
inequalities.

We conclude this section with a comment on the class of $m$--linear forms
composed by the sum of monomials whose coefficients are $\pm 1$ (unimodular
monomials), as it plays a relevant -- probably central -- role in the
theory. By way of example, we recall that the proof of the optimality of
Theorem \ref{ujm} or, more precisely, the proof of (B)$\Rightarrow $(A) uses
only $m$--linear forms composed by the sum of unimodular monomials via the
Kahane--Salem--Zygmund inequality; in other words we have (B)$\Rightarrow $%
(A) even if restrict the inequalities just to $m$--linear forms composed by
unimodular monomials. As further evidence of the importance of $m$--linear
forms composed by unimodular monomials, it will be shown later in this work
that the optimal constants of any mixed $\left( \ell _{1},\ell _{2}\right) $%
-Littlewood inequalities are attained by $m$--linear forms with coefficients
$\pm 1$ (see also \ref{att L}).

\begin{example}
\label{7b} In the case of bilinear forms $T:c_{0}\times c_{0}\rightarrow
\mathbb{R}$ of the type $%
T(x,y)=ax_{1}y_{1}+bx_{1}y_{2}+cx_{2}y_{1}+dx_{2}y_{1},$ using the geometry
of the closed unit ball of the space of bilinear forms on $\ell_{\infty
}^{2}\times \ell_{\infty }^{2},$ it was proved in \cite{ccc} that the extrema
are essentially the bilinear forms of the type (\ref{att L}). If we restrict
our attention to $m$--linear forms with unimodular monomials, the conjecture
that $2^{1-\frac{1}{m}}$ is sharp constant for the Bohnenblust--Hille
inequality is equivalent (using the Krein-Milman Theorem) to the following
question in number theory: For all positive integers $m$ and $n,$ $%
i_{1},\cdots ,i_{m}\in \{1,\cdots ,n\}$ and $\delta _{i_{1}\cdots i_{m}}\in
\left\{ 0,1,-1\right\} ,$ does the following estimate
\begin{equation}
\frac{\left( \card\left\{ \left( i_{1}\cdots i_{m}\right) :\delta
_{i_{1}\cdots i_{m}}\neq 0\right\} \right) ^{\frac{m+1}{2m}}}{2^{1-\frac{1}{m%
}}}\leq \max \left\{ \sum\limits_{i_{1},\cdots ,i_{m}=1}^{n}\delta
_{i_{1}\cdots i_{m}}x_{i_{1}}^{(1)}\cdots x_{i_{m}}^{(m)}:\text{ }%
x_{i_{k}}^{(j)}\in \left\{ -1,1\right\} \right\}   \label{NT}
\end{equation}%
hold true? Treating the conjecture in this format brings computational
advantages. For instance, equality in \eqref{NT} holds when $\delta
_{i_{1}\cdots i_{m}}$ follows the pattern of the strategic \textquotedblleft
classical" $m$--linear forms. For $m=2$, we know that \eqref{NT} is true.
\end{example}

We conclude this present section by introducing the notion of complexity,
which will also play an important role in the analyses to be carried out in
the next sections.

\medskip

Let $T\colon c_{0}\times \cdots \times c_{0}\rightarrow \mathbb{K}$ be given
by%
\begin{equation*}
T\left( x^{(1)},\cdots ,x^{(m)}\right) =\sum\limits_{i_{1},\cdots
,i_{m}=1}^{\infty }\alpha _{i_{1},\cdots ,i_{m}}x_{i_{1}}^{(1)}\cdots
x_{i_{m}}^{(m)}
\end{equation*}%
and define, for all fixed $k${$=1,\cdots ,m,$ and fixed }$i_{k}\in \mathbb{N}
$, {the set%
\begin{equation}
S_{k}^{(i_{k})}(T)=\left\{ \left( i_{1}\cdots i_{m}\right) :\alpha
_{i_{1},\cdots ,i_{m}}\neq 0\right\} .  \label{9009}
\end{equation}%
}We define the \textit{complexity} of $T$, and denote by $\comp(T)$, as%
\begin{equation}
\comp\left( T\right) =\sup_{k=1,\cdots ,m}~\sup_{i_{k}\in \mathbb{N}}\card%
\left( S_{k}^{(i_{k})}(T)\right) ,  \label{9000}
\end{equation}

For instance, if for $j=1,2,$
\begin{equation*}
T_{j}\colon c_{0}\times c_{0}\rightarrow \mathbb{K}
\end{equation*}%
are given by
\begin{equation*}
T_{1}(x,y)=x_{1}\sum\limits_{j=1}^{\infty }\frac{y_{j}}{2^{j}},
\end{equation*}%
and%
\begin{equation*}
T_{2}(x,y)=x_{1}y_{1}+x_{1}y_{2}+x_{2}y_{3},
\end{equation*}%
then
\begin{eqnarray*}
S_{1}^{(1)}(T_{1}) &=&\left\{ \left( 1,j\right) :j\in \mathbb{N}\right\} , \\
S_{1}^{(k)}(T_{1}) &=&\varnothing \text{ for }k>1, \\
S_{2}^{(k)}(T_{1}) &=&\left\{ \left( 1,k\right) \right\} \text{ for all }k,
\end{eqnarray*}%
and%
\begin{eqnarray*}
S_{1}^{(1)}(T_{2}) &=&\left\{ \left( 1,1\right) ,\left( 1,2\right) \right\} ,
\\
S_{1}^{(2)}(T_{2}) &=&\left\{ \left( 2,3\right) \right\} , \\
S_{2}^{(1)}(T_{2}) &=&\left\{ \left( 1,1\right) \right\} , \\
S_{2}^{(2)}(T_{2}) &=&\left\{ \left( 1,2\right) \right\} , \\
S_{2}^{(3)}(T_{2}) &=&\left\{ \left( 2,3\right) \right\} ,
\end{eqnarray*}%
and hence
\begin{equation*}
\comp(T_{1})=\infty \text{ and }\comp(T_{2})=2.
\end{equation*}%
In essence, the complexity of a $m$-linear form measures the biggest
possible degree of combination between the variables.

In the next section we classify all the extrema of the Littlewood's $4/3$%
-inequality, that is the $2$-linear Bohnenblust--Hille inequality.


\section{Classification of all extrema for Littlewood's $4/3$ inequality
\label{8zzz}}

As it has been previously mentioned, all known extrema for Littlewood's $4/3$
inequality up to date are bilinear forms like (\ref{att L}). In this section
we show that this is in fact essentially the unique extremum for
Littlewood's $4/3$ inequality. Besides its own interest, this result hints
out the possible pattern for the extrema in the $m$-linear case (see Section %
\ref{g} for details). As we will show, this would ultimately prove that, in
fact, the optimal constants of the $m$-linear Bohnenblust--Hille inequality
are $2^{1-\frac{1}{m}}$, and thus bounded with respect to $m$.

Before we continue, for the sake of the reader's convenience, let us recall
the Khinchin inequality. If $r_n(t)$ denote the Rademacher functions,
\begin{equation*}
r_{n}(t):=\sign\left( \sin 2^{n}\pi t\right),
\end{equation*}%
the Khinchin inequality asserts that for any $p>0$ there are constants $%
A_{p},B_{p}>0$ such that
\begin{equation}
A_{p}\left( \sum\limits_{j=1}^{\infty }\left\vert a_{j}\right\vert
^{2}\right) ^{\frac{1}{2}}\leq \left( \int\limits_{0}^{1}\left\vert
\sum\limits_{j=1}^{\infty }r_{j}(t)a_{j}\right\vert ^{p}dt\right) ^{\frac{1}{%
p}}\leq B_{p}\left( \sum\limits_{j=1}^{\infty }\left\vert a_{j}\right\vert
^{2}\right) ^{\frac{1}{2}}  \label{65}
\end{equation}%
for all sequence of scalars $\left( a_{i}\right) _{i=1}^{\infty }.$ The best
constants $A_{p}$ are known (see \cite{haa, szarek}):

\begin{itemize}
\item $\displaystyle A_{p}=\sqrt{2}\left( \frac{\Gamma \left( \frac{1+p}{2}%
\right) }{\sqrt{\pi }}\right) ^{\frac{1}{p}}$ if $p\geq p_{0}\cong 1.8474$;

\item $\displaystyle A_{p}=2^{\frac{1}{2}-\frac{1}{q}}$ if $p<p_0$.
\end{itemize}

\medskip

The critical exponent $p_{0}$ is the unique real number in $p_{0}\in (1,2)$
verifying
\begin{equation*}
\Gamma \left( \frac{p_{0}+1}{2}\right) =\frac{\sqrt{\pi }}{2}.
\end{equation*}

\begin{lemma}
\label{7vg}If $T(x,y)=\sum\limits_{i,j=1}^{\infty }a_{ij}x_{i}y_{j}$ is an
extremum for Littlewood's $4/3$ inequality, then%
\begin{equation}
\comp\left( T\right) =2.  \label{8h}
\end{equation}%
Moreover, for all $\left( i_{1,0}j_{1}\right) ,\left( i_{1,0}j_{2}\right)
\in S_{1}^{(i_{1,0})}$ and $\left( i_{1}i_{2,0}\right) ,\left(
i_{2}i_{2,0}\right) \in S_{2}^{(i_{2,0})}$, we have%
\begin{equation}
\left\vert a_{i_{1,0}j_{1}}\right\vert =\left\vert
a_{i_{1,0}j_{2}}\right\vert  \label{8h2}
\end{equation}%
and%
\begin{equation}
\left\vert a_{i_{1}i_{2,0}}\right\vert =\left\vert
a_{i_{2}i_{2,0}}\right\vert .  \label{8h3}
\end{equation}
\end{lemma}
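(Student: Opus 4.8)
The plan is to exploit the fact that the sharp Littlewood constant $\sqrt{2}$ is produced by a short chain of inequalities, and that an extremum must turn every link of that chain into an equality. For $T(x,y)=\sum_{i,j}a_{ij}x_{i}y_{j}$ write $A=\sum_{i}(\sum_{j}|a_{ij}|^{2})^{1/2}$ and $B=\sum_{j}(\sum_{i}|a_{ij}|^{2})^{1/2}$. The classical Littlewood mixed-norm interpolation inequality gives $(\sum_{i,j}|a_{ij}|^{4/3})^{3/4}\le A^{1/2}B^{1/2}$, while the two mixed $(\ell_{1},\ell_{2})$-Littlewood inequalities, whose sharp constant is $\sqrt{2}$ for $m=2$ (equivalently, the $(1,2)$ inequality applied to $T$ and to its transpose, cf.\ \cite{daniel}), give $A\le\sqrt{2}\Vert T\Vert$ and $B\le\sqrt{2}\Vert T\Vert$. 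Chaining these yields $(\sum_{i,j}|a_{ij}|^{4/3})^{3/4}\le\sqrt{2}\Vert T\Vert$. If $T$ is an extremum the left-hand side equals $\sqrt{2}\Vert T\Vert$, so every inequality is saturated; since $A^{1/2},B^{1/2}\le(\sqrt{2}\Vert T\Vert)^{1/2}$ and their product equals $\sqrt{2}\Vert T\Vert$, I would first deduce separately that $A=\sqrt{2}\Vert T\Vert$ and $B=\sqrt{2}\Vert T\Vert$.

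Next I would reopen the Khinchin-based proof of $A\le\sqrt{2}\Vert T\Vert$ to read off the rigidity. For each fixed row $i$, Khinchin at $p=1$ with the sharp constant $A_{1}=1/\sqrt{2}$ gives $(\sum_{j}|a_{ij}|^{2})^{1/2}\le\sqrt{2}\int_{0}^{1}|\sum_{j}r_{j}(t)a_{ij}|\,dt$; summing over $i$ and choosing $\varepsilon_{i}(t)=\sign(\sum_{j}r_{j}(t)a_{ij})$ one has $\sum_{i}|\sum_{j}r_{j}(t)a_{ij}|=T(\varepsilon(t),r(t))\le\Vert T\Vert$ for a.e.\ $t$. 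Because $A=\sqrt{2}\Vert T\Vert$, equality must hold in the Khinchin step for every row with $\sum_{j}|a_{ij}|^{2}\ne 0$ (and $\sum_{i}|\sum_{j}r_{j}(t)a_{ij}|=\Vert T\Vert$ for a.e.\ $t$).

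The decisive ingredient is then the equality analysis of the sharp $L_{1}$--$L_{2}$ Khinchin inequality, which I would isolate as a separate claim: for real scalars $(c_{j})$ one has $\int_{0}^{1}|\sum_{j}c_{j}r_{j}(t)|\,dt=\frac{1}{\sqrt{2}}(\sum_{j}c_{j}^{2})^{1/2}$ if and only if exactly two of the $c_{j}$ are nonzero and have equal modulus. The two-coordinate case is an explicit computation, $\int_{0}^{1}|c_{1}r_{1}+c_{2}r_{2}|\,dt=\tfrac{1}{2}(|c_{1}+c_{2}|+|c_{1}-c_{2}|)=\max(|c_{1}|,|c_{2}|)$, which equals $\frac{1}{\sqrt{2}}(c_{1}^{2}+c_{2}^{2})^{1/2}$ precisely when $|c_{1}|=|c_{2}|$, whereas a single nonzero coordinate gives ratio $1>1/\sqrt{2}$. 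The hard part, which I expect to be the main obstacle, is to show the inequality is \emph{strict} as soon as three or more coordinates are nonzero; I would handle it by reducing to finitely many terms, writing the integral as the average $2^{-N}\sum_{\varepsilon\in\{-1,1\}^{N}}|\sum_{j}\varepsilon_{j}c_{j}|$, and forcing the two-term configuration (e.g.\ by conditioning on one Rademacher variable and using strict convexity, or by invoking the equality case in Szarek's sharp estimate \cite{szarek}).

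Applying this claim row by row shows that every row $i$ carrying a nonzero entry contains exactly two nonzero coefficients of equal modulus; this is precisely $\card(S_{1}^{(i)})=2$ together with \eqref{8h2}. Running the identical argument for the companion identity $B=\sqrt{2}\Vert T\Vert$, now slicing $T$ into columns (i.e.\ applying the same reasoning to the transpose), yields $\card(S_{2}^{(j)})=2$ and \eqref{8h3}. Since $T\ne 0$ there is at least one nonzero row, so the supremum defining the complexity is attained and equals $2$, giving $\comp(T)=\sup_{k,i_{k}}\card(S_{k}^{(i_{k})})=2$, which is \eqref{8h}.
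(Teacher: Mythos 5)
Your proof is correct and follows essentially the same route as the paper's: the sharp Khinchin inequality at $p=1$ together with Szarek's characterization of its equality case, fed into the classical mixed-norm interpolation that produces the constant $\sqrt{2}$. The only (immaterial) differences are that you interpolate between the two $\ell_{1}(\ell_{2})$-norms taken in opposite orders of summation and argue by saturation of every link, while the paper interpolates between the $\ell_{1}(\ell_{2})$- and $\ell_{2}(\ell_{1})$-norms with a fixed order of summation and argues by contraposition.
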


\begin{proof}
By the Khinchin Inequality we know that
\begin{equation*}
\left( \sum\limits_{j=1}^{\infty }\left\vert a_{j}\right\vert ^{2}\right) ^{%
\frac{1}{2}}\leq \sqrt{2}\left( \int\limits_{0}^{1}\left\vert
\sum\limits_{j=1}^{\infty }r_{j}(t)a_{j}\right\vert dt\right) ,
\end{equation*}%
and the equality holds if and only if $\left( a_{j}\right) _{j=1}^{\infty
}=\alpha \left( \pm e_{i}\pm e_{j}\right) $ for some $\alpha \neq 0$ and $%
i\neq j$ (see (\cite{szarek}). In particular, if (\ref{8h}) or (\ref{8h2})
or (\ref{8h3}) fails, then%
\begin{equation*}
\left( \sum\limits_{j=1}^{\infty }\left\vert a_{j}\right\vert ^{2}\right) ^{%
\frac{1}{2}}<\sqrt{2}\left( \int\limits_{0}^{1}\left\vert
\sum\limits_{j=1}^{\infty }r_{j}(t)a_{j}\right\vert dt\right) .
\end{equation*}%
In fact, if $T:c_{0}\times c_{0}\rightarrow \mathbb{R}$ is given by
\begin{equation*}
T(x,y)=\sum\limits_{i,j=1}^{\infty }a_{ij}x_{i}y_{j}
\end{equation*}%
with%
\begin{equation*}
0<\card\left( S_{2}^{(j_{0})}\right) \neq 2
\end{equation*}%
for some $j_{0},$ we have%
\begin{eqnarray*}
&&\sum\limits_{j=1}^{\infty }\left( \sum\limits_{i=1}^{\infty }\left\vert
T(e_{i},e_{j}\right\vert ^{2}\right) ^{\frac{1}{2}} \\
&=&\left( \sum\limits_{i=1}^{\infty }\left\vert
T(e_{i},e_{j_{0}})\right\vert ^{2}\right) ^{\frac{1}{2}}+\sum\limits_{j\neq
j_{0}}\left( \sum\limits_{i=1}^{\infty }\left\vert T(e_{i},e_{j})\right\vert
^{2}\right) ^{\frac{1}{2}} \\
&<&\sqrt{2}\left( \int\limits_{0}^{1}\left\vert \sum\limits_{i=1}^{\infty
}r_{i}(t)T(e_{i},e_{j_{0}})\right\vert dt\right) +\sum\limits_{j\neq j_{0}}%
\sqrt{2}\left( \int\limits_{0}^{1}\left\vert \sum\limits_{i=1}^{\infty
}r_{i}(t)T(e_{i},e_{j})\right\vert dt\right)  \\
&=&\sqrt{2}\left( \int\limits_{0}^{1}\sum\limits_{j=1}^{\infty }\left\vert
T\left( \sum\limits_{i=1}^{\infty }r_{i}(t)e_{i},e_{j}\right) \right\vert
dt\right)
\end{eqnarray*}%
and thus%
\begin{equation}
\sum\limits_{j=1}^{\infty }\left( \sum\limits_{i=1}^{\infty }\left\vert
T(e_{i},e_{j})\right\vert ^{2}\right) ^{\frac{1}{2}}<\sqrt{2}\left\Vert
T\right\Vert .  \label{set44}
\end{equation}%
The other cases are similar. On the other hand, it is well known that%
\begin{equation}
\left( \sum\limits_{j=1}^{\infty }\left( \sum\limits_{i=1}^{\infty
}\left\vert T(e_{i},e_{j})\right\vert ^{1}\right) ^{\frac{1}{1}\cdot
2}\right) ^{\frac{1}{2}}\leq \sqrt{2}\left\Vert T\right\Vert .  \label{set55}
\end{equation}%
Applying (\ref{set44}), (\ref{set55}) and the H\"{o}lder inequality for
mixed sums we conclude that%
\begin{equation*}
\left( \sum_{i,j=1}^{\infty }\left\vert T(e_{i},e_{j})\right\vert ^{\frac{4}{%
3}}\right) ^{3/4}<\sqrt{2}\left\Vert T\right\Vert .
\end{equation*}%
Therefore, if $T$ is an extremum we have (\ref{8h}), (\ref{8h2}) and (\ref%
{8h3}).
\end{proof}

There are three types of bilinear forms on $c_{0}\times c_{0}$ that verify (%
\ref{8h}), (\ref{8h2}) and (\ref{8h3}):

\bigskip

\noindent \textbf{Type I}: there are $\left( N_{j}\right) _{j=1}^{s}$ and $%
\left( M_{j}\right) _{j=1}^{s},$ each of one of them a family of pairwise
disjoint positive integers with $\card\left( N_{k}\right) =\card\left(
M_{k}\right) =2$ for all $k$ and $s\in \lbrack 1,\infty ]$, such that
\begin{equation*}
T(x,y)=\sum\limits_{k=1}^{s}\left( \overset{\text{elementary term of type I}}%
{\overbrace{\sum\limits_{\left( i,j\right) \in N_{k}\times
M_{k}}a_{ij}^{(k)}x_{i}y_{j}}}\right) ,
\end{equation*}%
with $a_{ij}^{(k)}\neq 0$ and, for all $k,$ we have $\left\vert
a_{ij}^{(k)}\right\vert =\left\vert a_{ij}^{(k)}\right\vert $ whenever $%
\left( i,j\right) \in N_{k}\times M_{k}.$ Note that the elementary terms are
composed by $4$ monomials with the same coefficient. An illustration of
bilinear form of type I is:%
\begin{equation*}
T_{1}(x,y)=\left( \overset{\text{elementary term of type I}}{\overbrace{\pm
ax_{1}y_{1}\pm ax_{1}y_{2}\pm ax_{2}y_{1}\pm ax_{2}y_{1}}}\right) +\left(
\overset{\text{elementary term of type I}}{\overbrace{\pm bx_{3}y_{3}\pm
bx_{3}y_{4}\pm bx_{4}y_{3}\pm bx_{4}y_{4}}}\right) ,
\end{equation*}%
with $a\neq 0$ and $b\neq 0$.

\bigskip

\noindent \textbf{Type II}: In this case we consider sums of bigger
monomials (recall that the complexity is always $2$). The simplest case of
bilinear forms of type II is a bilinear form like
\begin{equation}
T_{2}(x,y)=\overset{\text{elementary term of type II}}{\overbrace{\left( \pm
ax_{1}y_{1}\pm ax_{1}y_{2}\right) +\left( \pm ax_{3}y_{1}\pm
ax_{4}y_{2}\right) +\left( \pm ax_{3}y_{5}\pm ax_{4}y_{5}\right) }},
\label{799}
\end{equation}%
with $a\neq 0.$ In this case it is simple to prove that%
\begin{equation*}
\left\Vert T_{2}\right\Vert \geq 4\left\vert a\right\vert
\end{equation*}%
and since%
\begin{equation*}
\frac{\left( 6\left\vert a\right\vert ^{4/3}\right) ^{3/4}}{\left\Vert
T_{2}\right\Vert }\leq \frac{6^{3/4}}{4}<1<\sqrt{2}
\end{equation*}%
we do not have an extremum. Another illustration is%
\begin{equation}
T_{2^{\prime }}(x,y)=\overset{\text{elementary term of type II}}{\overbrace{%
\left( \pm ax_{1}y_{1}\pm ax_{1}y_{2}\right) +\left( \pm ax_{3}y_{1}\pm
ax_{4}y_{2}\right) +\left( \pm ax_{3}y_{5}\pm ax_{4}y_{6}\right) +\left( \pm
ax_{5}y_{5}\pm ax_{5}y_{6}\right) }},  \label{801}
\end{equation}%
with $a\neq 0.$ In this case%
\begin{equation*}
\left\Vert T_{2^{\prime }}\right\Vert \geq 6\left\vert a\right\vert
\end{equation*}%
and since%
\begin{equation*}
\frac{\left( 8\left\vert a\right\vert ^{4/3}\right) ^{3/4}}{\left\Vert
T_{2^{\prime }}\right\Vert }\leq \frac{8^{3/4}}{6}<0.8<\sqrt{2}
\end{equation*}%
we, again, do not have an extremum and so on. We may also have sums of
elementary\ terms. For instance:%
\begin{eqnarray*}
T_{2^{\prime \prime }}(x,y) &=&\overset{\text{elementary term of type II}}{%
\left[ \overbrace{\left( \pm ax_{1}y_{1}\pm ax_{1}y_{2}\right) +\left( \pm
ax_{3}y_{1}\pm ax_{4}y_{2}\right) +\left( \pm ax_{3}y_{5}\pm
ax_{4}y_{5}\right) }\right] } \\
&&+\overset{\text{elementary term of type II}}{\left[ \overbrace{\left( \pm
bx_{6}y_{6}\pm bx_{6}y_{7}\right) +\left( \pm bx_{7}y_{6}\pm
bx_{8}y_{7}\right) +\left( \pm bx_{7}y_{8}\pm bx_{8}y_{9}\right) +\left( \pm
bx_{9}y_{8}\pm bx_{9}y_{9}\right) }\right] },
\end{eqnarray*}%
with $a\neq 0$ and $b\neq 0.$ This case is even farther from being an
extremum, since%
\begin{equation*}
\left\Vert T_{2^{\prime \prime }}\right\Vert \geq 4\left\vert a\right\vert
+6\left\vert b\right\vert
\end{equation*}%
and thus%
\begin{eqnarray*}
\frac{\left( 6\left\vert a\right\vert ^{4/3}+8\left\vert b\right\vert
^{4/3}\right) ^{3/4}}{\left\Vert T_{2^{\prime \prime }}\right\Vert } &\leq &%
\frac{\left( 6\left\vert a\right\vert ^{4/3}+8\left\vert b\right\vert
^{4/3}\right) ^{3/4}}{4\left\vert a\right\vert +6\left\vert b\right\vert } \\
&\leq &\frac{6^{3/4}\left\vert a\right\vert +8^{3/4}\left\vert b\right\vert
}{4\left\vert a\right\vert +6\left\vert b\right\vert } \\
&\leq &\max \left\{ \frac{6^{3/4}}{4},\frac{8^{3/4}}{6}\right\} <\sqrt{2}.
\end{eqnarray*}%
A simple, though tedious, argument shows that any such combination of
elementary terms does not provide extrema.\bigskip

\noindent \textbf{Type III}: combinations of elements of the first and
second types. Let $T_{3}=R_{1}+R_{2}$ with $R_{J}\neq 0$ being of type $%
J=1,2.$ Note that since
\begin{equation*}
\comp(T_{3})=\comp\left( R_{1}\right) =\comp\left( R_{2}\right) =2,
\end{equation*}%
there is no overlapping between $R_{1}$ and $R_{2}$ and we have%
\begin{equation*}
\left\Vert T_{3}\right\Vert =\left\Vert R_{1}\right\Vert +\left\Vert
R_{2}\right\Vert .
\end{equation*}%
Thus, since $R_{2}$ is not an extremum,%
\begin{eqnarray*}
\left( \sum_{i,j=1}^{\infty }\left\vert T_{3}(e_{i},e_{j})\right\vert ^{%
\frac{4}{3}}\right) ^{3/4} &\leq &\left( \sum_{i,j=1}^{\infty }\left\vert
R_{1}(e_{i},e_{j})\right\vert ^{\frac{4}{3}}\right) ^{3/4}+\left(
\sum_{i,j=1}^{\infty }\left\vert R_{2}(e_{i},e_{j})\right\vert ^{\frac{4}{3}%
}\right) ^{3/4} \\
&<&\sqrt{2}\left\Vert R_{1}\right\Vert +\sqrt{2}\left\Vert R_{2}\right\Vert
\\
&=&\sqrt{2}\left\Vert T_{3}\right\Vert ,
\end{eqnarray*}%
and thus $T_{3}$ is not an extremum.

\medskip

From the previous considerations we conclude that extrema of Littlewood's $%
4/3$ inequality must satisfy (\ref{8h}), (\ref{8h2}) and (\ref{8h3}) and be
of type I. The following theorem gives a final and complete characterization:

\begin{theorem}
A bilinear form $T$ is an extremum of Littlewood's $4/3$ inequality if and
only if $T$ is written as
\begin{eqnarray*}
T(x,y) &=&2^{-1/2}\left(
x_{i_{1}}y_{i_{2}}+x_{i_{1}}y_{i_{3}}+x_{i_{2}}y_{i_{4}}-x_{i_{2}}y_{i_{4}}%
\right) , \\
T(x,y) &=&2^{-1/2}\left(
x_{i_{1}}y_{i_{2}}+x_{i_{1}}y_{i_{3}}-x_{i_{2}}y_{i_{4}}+x_{i_{2}}y_{i_{4}}%
\right) , \\
T(x,y) &=&2^{-1/2}\left(
x_{i_{1}}y_{i_{2}}-x_{i_{1}}y_{i_{3}}+x_{i_{2}}y_{i_{4}}+x_{i_{2}}y_{i_{4}}%
\right) , \\
T(x,y) &=&2^{-1/2}\left(
-x_{i_{1}}y_{i_{2}}+x_{i_{1}}y_{i_{3}}+x_{i_{2}}y_{i_{4}}+x_{i_{2}}y_{i_{4}}%
\right)
\end{eqnarray*}%
for $i_{1}\neq i_{2}$ and $i_{3}\neq i_{4}.$
\end{theorem}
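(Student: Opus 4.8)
The plan is to start from exactly where the preceding analysis stops: by Lemma~\ref{7vg} together with the elimination of Types~II and~III, every extremum $T$ is of Type~I, i.e. a sum $T=\sum_{k=1}^{s}R_k$ of elementary terms supported on pairwise disjoint blocks $N_k\times M_k$ with $\card(N_k)=\card(M_k)=2$, each $R_k$ having all four coefficients of one common modulus $t_k>0$. Two reductions then remain, after which the converse is a direct verification: one must show that $s=1$, and then pin down which sign patterns inside that single block actually yield extrema.

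First I would rule out $s\ge 2$. Because distinct blocks occupy disjoint $x$- and $y$-indices, the supremum defining the operator norm splits variable-wise; optimizing each $R_k$ on its own coordinates and using sign symmetry to pass from $\sup$ to $\sup|\cdot|$ gives the additivity $\|T\|=\sum_{k=1}^{s}\|R_k\|$. On the other hand the Littlewood sum recombines as an $\ell_{4/3}$-gluing,
\begin{equation*}
\Big(\sum_{i,j}|T(e_i,e_j)|^{4/3}\Big)^{3/4}=\Big(\sum_{k=1}^{s}\Big(\sum_{i,j}|R_k(e_i,e_j)|^{4/3}\Big)\Big)^{3/4}.
\end{equation*}
Since $4/3>1$, the $\ell_{4/3}$-norm of a vector with at least two nonzero entries is strictly smaller than its $\ell_1$-norm; combining this with the per-block Littlewood bound $\big(\sum_{i,j}|R_k(e_i,e_j)|^{4/3}\big)^{3/4}\le\sqrt{2}\,\|R_k\|$ yields
\begin{equation*}
\Big(\sum_{i,j}|T(e_i,e_j)|^{4/3}\Big)^{3/4}<\sum_{k=1}^{s}\sqrt{2}\,\|R_k\|=\sqrt{2}\,\|T\|,
\end{equation*}
so $T$ fails to be an extremum whenever $s\ge 2$.

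Next I would analyze the surviving single block $R(x,y)=t\sum_{(i,j)\in N\times M}\varepsilon_{ij}\,x_iy_j$ with $\varepsilon_{ij}\in\{\pm1\}$, $N=\{i_1,i_2\}$, $M=\{i_3,i_4\}$. The decisive dichotomy is whether the sign matrix has rank one, i.e. whether $\varepsilon_{i_1i_3}\varepsilon_{i_2i_4}=\varepsilon_{i_1i_4}\varepsilon_{i_2i_3}$. In that case $R$ factors as $R(x,y)=t(\pm x_{i_1}\pm x_{i_2})(\pm y_{i_3}\pm y_{i_4})$, so $\|R\|=4t$ while the Littlewood sum equals $(4t^{4/3})^{3/4}=2\sqrt{2}\,t$, giving ratio $2^{-1/2}<\sqrt{2}$: not an extremum. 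If instead the sign product is $-1$ (an odd number of minus signs), the computation done for $T_2$ in \eqref{att L} applies verbatim, namely $\|R\|=2t$ and Littlewood sum $2\sqrt{2}\,t$, so the ratio is exactly $\sqrt{2}$ and $R$ is an extremum. Up to the global change $R\mapsto-R$ and the labelling $i_1\ne i_2$, $i_3\ne i_4$, the odd patterns are precisely the four displayed forms (the single minus placed in each of the four positions); fixing $t=2^{-1/2}$ normalizes the statement, so that $\|T\|=\sqrt2$ and $\big(\sum_{i,j}|T(e_i,e_j)|^{4/3}\big)^{3/4}=2$.

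Finally, the converse direction is the routine verification that each of the four displayed forms attains equality in Littlewood's inequality, which follows from the same two numbers $\|T\|=\sqrt2$ and $\big(\sum_{i,j}|T(e_i,e_j)|^{4/3}\big)^{3/4}=2$. The main obstacle is the single-block sign analysis: one has to be careful that the modulus-equality conditions \eqref{8h2}--\eqref{8h3} supplied by Lemma~\ref{7vg} do \emph{not} by themselves force the extremal rank-two pattern, so the factoring case must be genuinely excluded by the explicit norm computation rather than dismissed a priori. The multi-block step is conceptually clean but hinges on the easy yet essential additivity of the operator norm across disjoint coordinate blocks, which I would state and justify explicitly before invoking strict subadditivity of the $\ell_{4/3}$-norm.
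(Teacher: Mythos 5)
Your proof is correct and follows essentially the same route as the paper's: reduction to Type I, additivity of the operator norm over disjoint blocks combined with the strict inequality between the $\ell_{4/3}$- and $\ell_{1}$-norms of $(|\alpha_k|)_k$ to force $s=1$, and a sign-pattern check on a single $2\times 2$ block. The only differences are cosmetic --- you eliminate $s\geq 2$ before rather than after the sign analysis, and your rank-one factorization dichotomy (even sign product gives $\Vert R\Vert =4t$, odd gives $\Vert R\Vert =2t$) makes explicit the step the paper dismisses as ``simple to check.''
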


\begin{proof}
We just need to consider bilinear forms of the type I. Denoting%
\begin{equation*}
T_{k}(x,y)=\sum\limits_{\left( i,j\right) \in N_{k}\times
M_{k}}a_{ij}^{(k)}x_{i}y_{j},
\end{equation*}%
since $\left( N_{j}\right) _{j=1}^{s}$ and $\left( M_{j}\right) _{j=1}^{s}$
are, each of one of them, a family of pairwise disjoint positive integers,
we have%
\begin{equation*}
\left\Vert T\right\Vert =\sum\limits_{k=1}^{s}\left\Vert T_{k}\right\Vert
\end{equation*}%
and
\begin{eqnarray*}
\left( \sum_{i,j=1}^{\infty }\left\vert T(e_{i},e_{j})\right\vert ^{\frac{4}{%
3}}\right) ^{3/4} &=&\left( \sum_{i,j=1}^{\infty }\left\vert
\sum_{k=1}^{s}T_{k}(e_{i},e_{j})\right\vert ^{\frac{4}{3}}\right) ^{3/4} \\
&\leq &\sum_{k=1}^{s}\left( \sum_{i,j=1}^{\infty }\left\vert
T_{k}(e_{i},e_{j})\right\vert ^{\frac{4}{3}}\right) ^{3/4}.
\end{eqnarray*}%
If $T_{k}$ is not extremum for some $k$, we have%
\begin{eqnarray*}
\left( \sum_{i,j=1}^{\infty }\left\vert T(e_{i},e_{j})\right\vert ^{\frac{4}{%
3}}\right) ^{3/4} &\leq &\sum_{k=1}^{s}\left( \sum_{i,j=1}^{\infty
}\left\vert T_{k}(e_{i},e_{j})\right\vert ^{\frac{4}{3}}\right) ^{3/4} \\
&<&\sqrt{2}\sum\limits_{k=1}^{\infty }\left\Vert T_{k}\right\Vert  \\
&=&\sqrt{2}\left\Vert T\right\Vert
\end{eqnarray*}%
and thus $T$ is not extremum. Thus all $T_{k}$ need to be extrema. Recall
that for a given $k,$ the coefficients of the monomials of $T_{k}$ are all
the same in absolute value. So, it is simple to check that
\begin{equation*}
T_{k}(x,y)=\alpha _{k}\left(
x_{i_{k,1}}y_{i_{k,2}}+x_{i_{k,1}}y_{i_{k,3}}+x_{i_{k,4}}y_{i_{k,2}}-x_{i_{k,4}}y_{i_{k,3}}\right) ,
\end{equation*}%
or
\begin{equation*}
T_{k}(x,y)=\alpha _{k}\left(
x_{i_{k,1}}y_{i_{k,2}}+x_{i_{k,1}}y_{i_{k,3}}-x_{i_{k,4}}y_{i_{k,2}}+x_{i_{k,4}}y_{i_{k,3}}\right) ,
\end{equation*}%
or
\begin{equation*}
T_{k}(x,y)=\alpha _{k}\left(
x_{i_{k,1}}y_{i_{k,2}}-x_{i_{k,1}}y_{i_{k,3}}+x_{i_{k,4}}y_{i_{k,2}}+x_{i_{k,4}}y_{i_{k,3}}\right) ,
\end{equation*}%
or
\begin{equation*}
T_{k}(x,y)=\alpha _{k}\left(
-x_{i_{k,1}}y_{i_{k,2}}+x_{i_{k,1}}y_{i_{k,3}}+x_{i_{k,4}}y_{i_{k,2}}+x_{i_{k,4}}y_{i_{k,3}}\right)
\end{equation*}%
for some $\alpha _{k}\neq 0,$ and there is no overlapping between $%
T_{k_{1}},T_{k_{2}}$ for $k_{1}\neq k_{2}.$ We have%
\begin{equation*}
\left( \sum_{i,j=1}^{\infty }\left\vert T_{1}(e_{i},e_{j})+\cdots
+T_{s}(e_{i},e_{j})\right\vert ^{\frac{4}{3}}\right) ^{3/4}=\left(
4\sum_{k=1}^{s}\left\vert \alpha _{k}\right\vert ^{4/3}\right) ^{3/4}
\end{equation*}%
and%
\begin{equation*}
\left\Vert T\right\Vert =\left\Vert T_{1}+\cdots +T_{s}\right\Vert
=2\sum_{k=1}^{s}\left\vert \alpha _{k}\right\vert .
\end{equation*}%
Since%
\begin{equation*}
\frac{\left( 4\sum_{k=1}^{s}\left\vert \alpha _{k}\right\vert ^{4/3}\right)
^{3/4}}{2\sum_{k=1}^{s}\left\vert \alpha _{k}\right\vert }=\sqrt{2},
\end{equation*}%
we conclude that $s=1.$ In fact, if $s>1,$ since $\alpha _{k}\neq 0$ for all
$k,$ we have%
\begin{equation*}
\left( \sum_{k=1}^{s}\left\vert \alpha _{k}\right\vert ^{4/3}\right)
^{3/4}<\sum_{k=1}^{s}\left\vert \alpha _{k}\right\vert ,
\end{equation*}%
and thus%
\begin{equation*}
\frac{\left( \sum_{i,j=1}^{\infty }\left\vert T_{1}(e_{i},e_{j})+\cdots
+T_{s}(e_{i},e_{j})\right\vert ^{\frac{4}{3}}\right) ^{3/4}}{\left\Vert
T_{1}+\cdots +T_{s}\right\Vert }=\frac{\left( 4\sum_{k=1}^{s}\left\vert
\alpha _{k}\right\vert ^{4/3}\right) ^{3/4}}{2\sum_{k=1}^{s}\left\vert
\alpha _{k}\right\vert }<\sqrt{2}
\end{equation*}%
and $T$ is not extremum.
\end{proof}

\begin{remark}
We have defined entropy as the minimal number of monomials needed to be
combined to generate an extremum. As we have seen, for $m=2$ this number is $%
4$, but this is not only a minimum; this is the unique number of monomials
that can be combined to generate an extremum. This is a quite curious
property that may be inherited when $m>2.$
\end{remark}

In the next section we show that the entropy of any mixed $\left( \ell
_{1},\ell _{2}\right) $-Littlewood inequalities does grow as predicted.
Indeed, we will prove the following lower and upper bounds
\begin{equation}
2^{m-1}\leq \ent_{m}(1,2,2,\cdots ,2),\cdots ,\ent_{m}(2,2,\cdots ,2,1)\leq
4^{m-1},  \label{7777}
\end{equation}%
which is definitive step in the proof of (\ref{ki}).

\section{Entropies with exponential growth and sharp constants \label{8nn}}

In this section we deliver a proof of (\ref{ki}) and (\ref{7777}). Next
theorem is the main step in this endeavor, which actually paves the way to
all the other optimal estimates we will obtain in this work.

\begin{theorem}
\label{8800}Let $m\geq 2$ and $i\geq 1$ be integers, then
\begin{equation}
\left( \sum_{j_{2},\cdots,j_{i-1},j_{i+1},\cdots,j_{m}=1}^{\infty }\left(
\sum_{j_{i}=1}^{\infty }\left\vert U(e_{j_{1}},\cdots,e_{j_{m}})\right\vert
\right) ^{2}\right) ^{1/2}\leq \left( \sqrt{2}\right) ^{m-1} \left\Vert
U\right\Vert,  \label{aaa}
\end{equation}%
holds for all continuous real $m$--linear forms $U\colon c_{0}\times \cdots
\times c_{0}\rightarrow \mathbb{R}$. Furthermore,
\begin{equation*}
\left( \sqrt{2}\right) ^{m-1} = C_{\left( 2,2,\cdots,2,1\right)m}
\end{equation*}
is the sharp constant.
\end{theorem}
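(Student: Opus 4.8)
The plan is to establish inequality \eqref{aaa} for the specific exponent $(2,\cdots,2,1)$ by induction on $m$, and then separately construct an extremal sequence of $m$--linear forms showing that $(\sqrt{2})^{m-1}$ cannot be improved. For the upper bound, the base case $m=2$ is precisely the mixed $(\ell_1,\ell_2)$--Littlewood inequality $\big(\sum_{j_1}(\sum_{j_2}|U(e_{j_1},e_{j_2})|)^2\big)^{1/2}\le\sqrt{2}\,\|U\|$, which follows from the Khinchin inequality with the optimal constant $\sqrt{2}$ exactly as in the computation in Lemma \ref{7vg}. For the inductive step, I would fix the last variable and regard $U$ as a family of $(m-1)$--linear forms $U_{j_m}=U(\cdot,\cdots,\cdot,e_{j_m})$, applying the induction hypothesis in the first $m-1$ slots and then a single application of Khinchin in the $\ell_1$--direction $j_m$. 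The delicate point is bookkeeping the order of summation so that the $\ell_1$--index (here the last one, $j_i=j_m$) stays inside while the $\ell_2$--indices are processed outside; Minkowski's inequality for the $\ell_2$--norm is what allows the partial sums to be recombined correctly after integrating the Rademacher functions.

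For the sharpness, I would exhibit an explicit sequence of $m$--linear forms built by iterating the Littlewood extremum. Starting from $T_2$ in \eqref{att L}, the natural candidate is a tensor-type construction
\begin{equation*}
U_m(x^{(1)},\cdots,x^{(m)})=\sum_{\varepsilon}\pm\, x^{(1)}_{i_1(\varepsilon)}\cdots x^{(m)}_{i_m(\varepsilon)},
\end{equation*}
consisting of $4^{m-1}$ unimodular monomials, where the sign pattern is inherited from repeatedly substituting $T_2$ into itself. The goal is to show $\|U_m\|=2^{m-1}$ while the left-hand side of \eqref{aaa} equals $(4^{m-1})^{1/2}=2^{m-1}$ times the appropriate normalization, so that the ratio attains exactly $(\sqrt{2})^{m-1}$. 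The norm computation $\|U_m\|=2^{m-1}$ should follow by evaluating $U_m$ at the choice of $\pm 1$ coordinates that aligns all signs, combined with an upper bound from the Cauchy--Schwarz or triangle inequality showing no larger value is possible.

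The main obstacle I anticipate is the sharpness direction rather than the inequality itself. The known proofs via Khinchin (cited as \cite{daniel}) handle the exponents $(1,2,\cdots,2)$ and $(2,1,2,\cdots,2)$ because the $\ell_1$--index sits in an outermost or near-outermost position, but for $(2,\cdots,2,1)$ the $\ell_1$--index is buried innermost, and interpolation-based methods only deliver the weak lower bound $\sqrt{2}$ (as noted in the introduction, following \cite{dia}). Thus the crux is constructing a genuinely non-symmetric extremal form whose structure is adapted to having the $\ell_1$--summation in the last slot; this is exactly the role of the ``strongly non-symmetric'' forms advertised in the abstract. I expect verifying that the constructed $U_m$ has norm exactly $2^{m-1}$ — as opposed to merely being bounded below by it — to require the most care, since an overestimate of $\|U_m\|$ would only reprove the already-known bound $(\sqrt{2})^{m-1}$ as an upper estimate without certifying optimality. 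Once the norm is pinned down exactly, combining it with the upper bound \eqref{aaa} yields $C_{(2,\cdots,2,1)m}=(\sqrt{2})^{m-1}$ and simultaneously establishes the upper entropy bound $\ent_m(2,\cdots,2,1)\le 4^{m-1}$ appearing in \eqref{7777}.
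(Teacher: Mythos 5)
Your overall architecture (a Khinchin-based upper bound plus an explicit extremal sequence for the lower bound) matches the paper's, and your upper-bound route is plausibly workable, though the paper takes a cleaner path: it uses the Minkowski-type inclusion \eqref{7j} to obtain $C_{(2,\ldots,2,1)m}\leq\cdots\leq C_{(1,2,\ldots,2)m}$ and then the standard bound \eqref{8768}, thereby avoiding the vector-valued step your induction would require. Note that the estimate $\bigl(\sum_{j_m}\Vert U(\cdot,\ldots,\cdot,e_{j_m})\Vert^{2}\bigr)^{1/2}\leq\sqrt{2}\,\Vert U\Vert$ does not follow from scalar Khinchin alone, since the points optimizing $\Vert U(\cdot,\ldots,\cdot,e_{j_m})\Vert$ depend on $j_m$ and cannot be absorbed into a single evaluation of $U$; this is a $(2,1)$-summing statement that needs its own justification.

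The genuine gap is in the sharpness direction, and it sits exactly where you express the least concern. The form obtained by ``repeatedly substituting $T_2$ into itself'' is the classical symmetric construction in which \emph{every} variable is shifted at each stage of the recursion. For that form, the inner $\ell_1$ sum over the distinguished index collects only $2$ nonzero terms for each admissible choice of the outer indices; a direct count gives a left-hand side of $2^{m-1}\sqrt{2}$ against a norm of $2^{m-1}$, i.e.\ a ratio of exactly $\sqrt{2}$ independent of $m$ --- which is precisely why, as the introduction notes, previous methods only yield $\sqrt{2}$ as a lower bound for the exponent $(2,\ldots,2,1)$. The missing idea is the paper's non-symmetric recursion
\begin{equation*}
S_{m}=\bigl(x_{1}^{(m)}+x_{2}^{(m)}\bigr)\,S_{m-1}\bigl(x^{(1)},\ldots,x^{(m-1)}\bigr)+\bigl(x_{1}^{(m)}-x_{2}^{(m)}\bigr)\,S_{m-1}\bigl(B^{2^{m-1}}(x^{(1)}),x^{(2)},\ldots,x^{(m-1)}\bigr),
\end{equation*}
in which \emph{only the first variable} --- the one carrying the $\ell_1$ sum --- is shifted. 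This makes the support of $S_m$ the full product set $\{1,\ldots,2^{m-1}\}\times\{1,2\}^{m-1}$ with unimodular coefficients, so the inner $\ell_1$ sum equals $2^{m-1}$ for \emph{every} choice of the outer indices, giving a left-hand side of $2^{m-1}\sqrt{2^{m-1}}$ while $\Vert S_m\Vert=2^{m-1}$, hence the ratio $(\sqrt{2})^{m-1}$. The norm computation you single out as the hard part is in fact routine for both the symmetric and non-symmetric versions (it follows from $|a+b|+|a-b|\leq 2\max\{|a|,|b|\}$ applied at each stage); the real difficulty, which your proposal does not resolve, is designing the support so that the innermost $\ell_1$ sum is long.
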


A consequence of Theorem \ref{8800} is that all sharp constants in the mixed
$\left( \ell _{1},\ell _{2}\right) $--Littlewood inequality is in fact equal
to $\left( \sqrt{2}\right) ^{m-1}.$

\begin{corollary}
\label{21}For all $m\geq 2$, we have%
\begin{equation}
C_{\left( 1,2,\cdots ,2\right) m}=\cdots =C_{\left( 2,2, \cdots ,2,1\right)
m}=\left( \sqrt{2}\right) ^{m-1}  \label{zzz}
\end{equation}%
and%
\begin{equation*}
2^{m-1}\leq \ent_{m}(1,2, \cdots ,2), \cdots ,\ent_{m}(2,2, \cdots ,2,1)\leq
4^{m-1}.
\end{equation*}
\end{corollary}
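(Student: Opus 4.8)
The plan is to derive the whole corollary from Theorem~\ref{8800}. The string of equalities \eqref{zzz} is immediate: Theorem~\ref{8800} with the distinguished $\ell_1$-index placed in slot $i$ gives $(\sqrt2)^{m-1}$ as the sharp constant for each $i=1,\dots,m$, and these $m$ exponents are precisely the ones listed in \eqref{zzz}. For the upper entropy bound I would exhibit the extremum that witnesses the sharpness half of Theorem~\ref{8800}: the strongly non-symmetric $m$-linear form produced there attains the ratio $(\sqrt2)^{m-1}$ and is built from exactly $4^{m-1}$ monomials, so $\ent_m(2,\dots,2,1)\le 4^{m-1}$, and relabeling the $\ell_2$-variables gives the same bound for every position of the $1$.

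The heart of the matter is the lower bound, which I would prove by induction on $m$, harvesting the equality conditions of the one-variable reduction underlying Theorem~\ref{8800} just as Lemma~\ref{7vg} does for $m=2$. By symmetry of the $\ell_2$-slots I may place the $1$ in the last slot. Let $U=\sum a_{j_1\cdots j_m}x^{(1)}_{j_1}\cdots x^{(m)}_{j_m}$ be any extremum with support $A=\{(j_1,\dots,j_m):a_{j_1\cdots j_m}\neq0\}$; if $A$ is infinite the bound is trivial, so assume $A$ finite, and write $S_k$ for the left-hand side of \eqref{aaa} attached to a $k$-linear form. Peeling the first variable via Minkowski's inequality in $\ell_2(j_1)$ and the sharp $L_1$-Khinchin inequality (constant $\sqrt2$) — the one-variable step underlying Lemma~\ref{7vg} — and writing $U_s:=U(r(s),\cdot,\dots,\cdot)$ for the Rademacher vector $r(s)=(r_j(s))_j$, one obtains
\[
S_m(U)\ \le\ \sqrt2\int_0^1 S_{m-1}(U_s)\,ds\ \le\ \sqrt2\int_0^1(\sqrt2)^{m-2}\|U_s\|\,ds\ \le\ (\sqrt2)^{m-1}\|U\|,
\]
where the middle inequality is the $(m-1)$-linear case of Theorem~\ref{8800} (trivial when $m=2$) and the last is $\|U_s\|\le\|U\|$.

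Since $U$ is an extremum every inequality is an equality. Equality in the last two forces $\|U_s\|=\|U\|$ and $S_{m-1}(U_s)=(\sqrt2)^{m-2}\|U_s\|$ for a.e.\ $s$, i.e.\ $U_s$ is a nonzero $(m-1)$-linear extremum for a.e.\ $s$. Equality in the Khinchin step forces, for each fixed $(j_2,\dots,j_m)$, the column $(a_{j_1,j_2,\dots,j_m})_{j_1}$ to have the shape $\alpha(\pm e_a\pm e_b)$ — the sole equality case recalled in the proof of Lemma~\ref{7vg} — so every nonempty $j_1$-column carries exactly two monomials. Letting $\pi$ forget the index $j_1$, this reads $\card(A)=2\,\card(\pi(A))$. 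By the inductive hypothesis $\card(\mathrm{supp}\,U_s)\ge 2^{m-2}$ for a.e.\ $s$, and $\mathrm{supp}\,U_s\subseteq\pi(A)$, whence $\card(\pi(A))\ge 2^{m-2}$ and therefore $\card(A)\ge 2^{m-1}$. The base case $m=1$ is trivial (a nonzero linear form has at least $1=2^{0}$ monomial and automatically realizes $S_1=\|\cdot\|$), and relabeling yields the stated bound for all the mixed exponents.

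I expect the main difficulty to lie in two places. On the lower-bound side, the delicate step is to justify cleanly that extremality of $U$ simultaneously (i) propagates to extremality of $U_s$ for almost every $s$ and (ii) forces the two-monomial structure of every $j_1$-column; this amounts to reading off the equality cases of Minkowski's and Khinchin's inequalities through the nested sums of \eqref{aaa}, exactly the computation performed by hand for $m=2$ in Lemma~\ref{7vg}. On the upper-bound side, the genuine work is internal to Theorem~\ref{8800}: producing an honest extremum with only $4^{m-1}$ monomials. Tensor products are useless here — if $V$ is $p$-linear and $W$ is $q$-linear, a short computation gives $S_{p+q}(V\otimes W)=\|a^{V}\|_{\ell_2}\,S_q(W)\le(\sqrt2)^{p+q-2}\|V\|\,\|W\|$, one factor of $\sqrt2$ short of extremality — so the construction must rely on the genuinely entangled, strongly non-symmetric forms highlighted throughout this work, and checking that their norm is small enough to attain $(\sqrt2)^{m-1}$ is the technical crux.
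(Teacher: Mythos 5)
Your treatment of the exponent $(2,\dots,2,1)$ is essentially sound, but the passage to the other $m-1$ mixed exponents contains a genuine gap. The exponents $(1,2,\dots,2),(2,1,2,\dots,2),\dots,(2,\dots,2,1)$ are distinguished by \emph{where in the nesting of the iterated sums} the exponent $1$ sits, not by which variable carries it; varying the index $i$ in Theorem~\ref{8800} merely permutes the roles of the variables and always produces the innermost placement, i.e.\ the exponent $(2,\dots,2,1)$. Consequently, neither ``placing the $\ell_1$-index in slot $i$'' nor ``relabeling the $\ell_2$-variables'' yields the sharp constant or the entropy bounds for, say, $(1,2,\dots,2)$, whose functional $\sum_{j_1}\bigl(\sum_{j_2,\dots,j_m}|U|^2\bigr)^{1/2}$ is at least as large and in general strictly larger. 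The missing ingredients are the Minkowski-type monotonicity \eqref{7j}, which gives the chain $C_{(2,\dots,2,1)m}\le\cdots\le C_{(1,2,\dots,2)m}$ of (\ref{8766}), and the separate upper estimate $C_{(1,2,\dots,2)m}\le(\sqrt2)^{m-1}$ of (\ref{8768}). Sandwiching these between the lower bound of Theorem~\ref{8800} forces all $m$ constants to equal $(\sqrt2)^{m-1}$; the same sandwich, applied to the single form $S_m$, shows that $S_m$ attains the value $(\sqrt2)^{m-1}\Vert S_m\Vert$ simultaneously for every nesting, and this is what delivers $\ent_m(1,2,\dots,2)\le 4^{m-1}$ and the analogous bounds for the intermediate exponents. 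Without these two steps your proposal only proves the corollary for the single exponent $(2,\dots,2,1)$.

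For the entropy lower bound your route diverges substantially from the paper's and is much heavier than necessary. The paper's argument is a short count: if $U$ has exactly $k$ monomials, Cauchy--Schwarz on each inner $\ell_1$-block together with the $\ell_2$-estimate (\ref{9m}) gives that the $(2,\dots,2,1)$ functional of $U$ is at most $k^{1/2}\Vert U\Vert$, whence $k\ge 2^{m-1}$ for any extremum; this needs no equality analysis and adapts verbatim to every nesting. Your induction --- harvesting Szarek's equality case of the $L_1$-Khinchin inequality to force two-point columns and propagating extremality to $U_s$ --- does appear workable for the innermost exponent, since equality in the composite chain forces equality in each individual Khinchin application and forces $S_{m-1}(U_s)=(\sqrt2)^{m-2}\Vert U_s\Vert$ with $\Vert U_s\Vert=\Vert U\Vert$ for a.e.\ $s$. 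But it carries real overhead that you do not acknowledge: to obtain $S_m(U)\le\sqrt2\int_0^1 S_{m-1}(U_s)\,ds$ you must interleave a Minkowski step of the form $\ell_2(\ell_1)\le\ell_1(\ell_2)$ and a Minkowski integral inequality around the scalar Khinchin inequality, and the whole scheme remains tied to the innermost nesting, so on its own it does not cover the other exponents appearing in the displayed entropy bound. The equality-case machinery you invoke is what the paper reserves for the finer classification of bilinear extrema in Lemma~\ref{7vg}, not for this corollary.
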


The rest of the section is devoted to the proofs of Theorem \ref{8800} and
Corollary \ref{21}, which are based on a radical change in the
\textquotedblleft usual\textquotedblright\ strategic $m$--linear forms.

\bigskip We start off by delivering a proof of the following simple
estimate:
\begin{equation}
\left( \sum\limits_{i_{1},\ldots ,i_{m}=1}^{\infty }\left\vert
U(e_{i_{^{1}}},\ldots ,e_{i_{m}})\right\vert ^{2}\right) ^{\frac{1}{2}}\leq
\left\Vert U\right\Vert ,  \label{9m}
\end{equation}%
holds for all continuous $m$--linear forms $U\colon c_{0}\times \cdots
\times c_{0}\rightarrow \mathbb{K}$. This is a consequence of Khinchin
inequality, or cotype if one prefers. Indeed, applying the Khinchin
inequality together with an induction argument, we reach%
\begin{equation*}
\left( \sum\limits_{i_{1},\ldots ,i_{m}=1}^{\infty }\left\vert
U(e_{i_{^{1}}},\ldots ,e_{i_{m}})\right\vert ^{2}\right) ^{\frac{1}{2}}\leq
\left( \int\limits_{0}^{1}\left\vert \sum\limits_{i_{1},\ldots
,i_{m}=1}^{\infty }r_{i_{1}}(t_{1})\cdots
r_{i_{m}}(t_{m})U(e_{i_{^{1}}}.\ldots ,e_{i_{m}})\right\vert ^{2}\right)
^{1/2}.
\end{equation*}%
From the multilinearity of $U$, we can further estimate,%
\begin{eqnarray*}
\left( \sum\limits_{i_{1},\ldots ,i_{m}=1}^{\infty }\left\vert
U(e_{i_{^{1}}},\ldots ,e_{i_{m}})\right\vert ^{2}\right) ^{\frac{1}{2}}
&\leq &\left( \int\limits_{0}^{1}\left\vert U(\sum\limits_{i_{1}=1}^{\infty
}r_{i_{1}}(t_{1})e_{i_{^{1}}},\ldots ,\sum\limits_{i_{m}=1}^{\infty
}r_{i_{m}}(t_{m})e_{i_{m}})\right\vert ^{2}\right) ^{1/2} \\
&\leq &\sup_{t_{1}\cdots ,t_{m}\in \lbrack 0,1]}\left\Vert U\right\Vert
\left\Vert \sum\limits_{i_{1}=1}^{\infty
}r_{i_{1}}(t_{1})e_{i_{^{1}}}\right\Vert \cdots \left\Vert
\sum\limits_{i_{m}=1}^{\infty }r_{i_{m}}(t_{m})e_{i_{^{m}}}\right\Vert  \\
&=&\left\Vert U\right\Vert .
\end{eqnarray*}%
for all continuous $m$--linear forms $U\colon c_{0}\times \cdots \times
c_{0}\rightarrow \mathbb{K}$.

We begin with $m=2$ and consider the standard bilinear form $S_{2}\colon
c_{0}\times c_{0}\rightarrow \mathbb{R}$,
\begin{equation*}
S_{2}(x,y)=x_{1}y_{1}+x_{1}y_{2}+x_{2}y_{1}-x_{2}y_{2}.
\end{equation*}%
As $\left\Vert S_{2}\right\Vert =2$, we conclude that%
\begin{equation*}
C_{(2,1)2}\geq \sqrt{2}.
\end{equation*}

From $m=3$ and on, we start to deform the standard $m$--linear forms in a
non-symmetric fashion. For that, define $S_{3} \colon c_{0}\times
c_{0}\times c_{0}\rightarrow \mathbb{R}$ by
\begin{equation*}
S_{3}(x,y,z)=(z_{1}+z_{2})\left(
x_{1}y_{1}+x_{1}y_{2}+x_{2}y_{1}-x_{2}y_{2}\right) +(z_{1}-z_{2})\left(
x_{3}y_{1}+x_{3}y_{2}+x_{4}y_{1}-x_{4}y_{2}\right).
\end{equation*}%
Note that $\Vert S_{3}\Vert =4$ and, since
\begin{equation*}
\left( \sum\limits_{i_{2},i_{3}=1}^{\infty }\left(
\sum\limits_{i_{1}=1}^{\infty }\left\vert
S_{3}(e_{i_{^{1}}},e_{i_{2}},e_{i_{3}})\right\vert \right) ^{2}\right)
^{1/2}=4\sqrt{4},
\end{equation*}%
we conclude
\begin{equation*}
C_{\left( 2,2,1\right) 3}\geq \left( \sqrt{2}\right) ^{2}.
\end{equation*}%
For $m=4$, we consider $S_{4} \colon c_{0}\times c_{0}\times c_{0}\times
c_{0}\rightarrow \mathbb{R}$ given by
\begin{align*}
& S_{4}(x,y,z,w)= \\
& =\left( w_{1}+w_{2}\right) \left(
\begin{array}{c}
(z_{1}+z_{2})\left( x_{1}y_{1}+x_{1}y_{2}+x_{2}y_{1}-x_{2}y_{2}\right) \\
+(z_{1}-z_{2})\left( x_{3}y_{1}+x_{3}y_{2}+x_{4}y_{1}-x_{4}y_{2}\right)%
\end{array}%
\right) \\
& +\left( w_{1}-w_{2}\right) \left(
\begin{array}{c}
(z_{1}+z_{2})\left( x_{5}y_{1}+x_{5}y_{2}+x_{6}y_{1}-x_{6}y_{2}\right) \\
+(z_{1}-z_{2})\left( x_{7}y_{1}+x_{7}y_{2}+x_{8}y_{1}-x_{8}y_{2}\right).%
\end{array}%
\right)
\end{align*}%
Similarly, we compute $\left\Vert S_{4}\right\Vert =8$ and
\begin{equation*}
\left( \sum\limits_{i_{2},i_{3},i_{4}=1}^{\infty }\left(
\sum\limits_{i_{1}=1}^{\infty }\left\vert
S_{4}(e_{i_{^{1}}},e_{i_{2}},e_{i_{3}},e_{i_{4}})\right\vert \right)
^{2}\right) ^{1/2}=8\sqrt{8}.
\end{equation*}%
Therefore, we obtain
\begin{equation*}
C_{\left( 2,2,2,1\right) 4}\geq \frac{8\sqrt{8}}{8}=\left( \sqrt{2}\right)
^{3}.
\end{equation*}

The construction can be carried out by induction for all $m$--linear form,
through the general non-symmetric procedure:
\begin{eqnarray*}
&&S_{m}\left( x^{(1)},\cdots,x^{(m)}\right) \\
&=&\left( x_{1}^{(m)}+x_{2}^{(m)}\right) S_{m-1}\left(
x^{(1)},\cdots,x^{(m-1)}\right) +\left( x_{1}^{(m)}-x_{2}^{(m)}\right)
S_{m-1}\left( B^{2^{m-1}}\left( x^{(1)}\right)
,x^{(2)},\cdots,x^{(m-1)}\right) ,
\end{eqnarray*}%
where%
\begin{equation*}
B^{2^{m-1}}\left( x^{(1)}\right) =\left(
x_{2^{m-1}+1}^{(1)},x_{2^{m-1}+2}^{(1)},\cdots\right)
\end{equation*}
for all natural number $m$. As before, such a construction yields%
\begin{equation}
C_{\left( 2,2,\cdots,2,1\right) m}\geq \left( \sqrt{2}\right) ^{m-1}.
\label{8765}
\end{equation}

It now follows by H\"{o}lder inequality that if
\begin{equation*}
U\left( x^{(1)},\cdots,x^{(m)}\right) =\sum_{j_{1},\cdots,j_{m}=1}^{\infty
}\alpha _{j_{1}\cdots j_{m}}x_{j_{1}}^{(1)}\cdots x_{j_{m}}^{(m)}
\end{equation*}%
is the sum of exactly $k$ monomials, then
\begin{eqnarray*}
&&\left( \sum_{j_{2},\cdots,j_{m}=1}^{\infty }\left( \sum_{j_{1}=1}^{\infty
}\left\vert U(e_{j_{1}},\cdots,e_{j_{m}})\right\vert \right) ^{2}\right)
^{1/2} \\
&\leq &\left( \sum_{j_{2},\cdots,j_{m}=1}^{\infty }\left( \left(
\sum_{j_{1}\in A_{j_{2}\cdots j_{m}}}^{\infty }\left\vert
U(e_{j_{1}},\cdots,e_{j_{m}})\right\vert ^{2}\right) ^{1/2}\times \left(
\sum_{j_{1}\in A_{j_{2}\cdots j_{m}}}^{\infty }1\right) ^{1/2}\right)
^{2}\right) ^{1/2} \\
&\leq &\left( \sum_{j_{1},\cdots,j_{m}=1}^{\infty }\left\vert
U(e_{j_{1}},\cdots,e_{j_{m}})\right\vert ^{2}\times
\sum_{j_{2},\cdots,j_{m}=1}^{\infty }\left( \sum_{j_{1}\in A_{j_{2}\cdots
j_{m}}}^{\infty }1\right) \right) ^{1/2} \\
&\leq &k^{1/2}\left\Vert U\right\Vert ,
\end{eqnarray*}%
where $A_{j_{2}\cdots j_{m}}=\left\{ j_{1}:\alpha _{j_{1}\cdots j_{m}}\neq
0\right\} $. In the last inequality we have used (\ref{9m}). Thus, if an
extremum $m$--linear form is composed by the sum of exactly $k$ monomials,
we conclude that
\begin{equation*}
k^{1/2}\geq \left( \sqrt{2}\right) ^{m-1},
\end{equation*}%
which means, in terms of entropy, that
\begin{equation}
\ent_{m}(2,2,\cdots ,2,1)\geq 2^{m-1}.  \label{ot}
\end{equation}

Next we recall that, if $1\leq p\leq q,$ then
\begin{equation}
\left( \sum\limits_{i}\left( \sum\limits_{j}\left\vert a_{ij}\right\vert
^{p}\right) ^{\frac{1}{p}q}\right) ^{\frac{1}{q}}\leq \left(
\sum\limits_{j}\left( \sum\limits_{i}\left\vert a_{ij}\right\vert
^{q}\right) ^{\frac{1}{q}p}\right) ^{\frac{1}{p}}.  \label{7j}
\end{equation}%
Thus,
\begin{equation}
C_{\left( 2,2,\cdots , 2,1\right) m}\leq \cdots \leq C_{\left( 1,2,\cdots
,2\right) m}.  \label{8766}
\end{equation}%
Easily one shows that
\begin{equation}
C_{\left( 1,2,\cdots ,2\right) m}\leq \left( \sqrt{2}\right) ^{m-1}
\label{8768}
\end{equation}%
and hence, combining (\ref{8765}), (\ref{8766}) and (\ref{8768}) we reach%
\begin{equation*}
\left( \sqrt{2}\right) ^{m-1}\leq C_{\left( 2,2,\cdots ,2,1\right) m}\leq
\cdots \leq C_{\left( 1,2,\cdots ,2\right) m}\leq \left( \sqrt{2}\right)
^{m-1}.
\end{equation*}
Finally we note that, since each $S_{m}$ is the sum of exactly $4^{m-1}$
monomials, we can estimate
\begin{equation*}
2^{m-1}\leq \ent_{m}(2,2,\cdots, 2,1)\leq 4^{m-1}.
\end{equation*}
Hence, the chain of inequalities (\ref{8766}) and standard symmetry
arguments yield
\begin{equation*}
2^{m-1}\leq \ent_{m}(1,2, \cdots,2),\cdots,\ent_{m}(2,2,\cdots ,2,1)\leq
4^{m-1},
\end{equation*}
and Corollary \ref{21} is finally proved. 
\newline

\bigskip We close this section with some additional considerations. The
mixed $\left( \ell _{1},\ell _{2}\right) $-Littlewood inequalities should be
regarded, in a natural way, as extremal Bohnenblust--Hille inequalities, in
the sense that they correspond to multiple exponents with maximum
\textquotedblleft diameter\textquotedblright . More precisely, let us define%
\begin{equation*}
\text{diam}\left( q_{1},\cdots ,q_{m}\right) =\max \left\vert
q_{i}-q_{j}\right\vert .
\end{equation*}%
An exponent $\left( q_{1},\cdots ,q_{m}\right) $ is said to be extremal if $%
\text{diam}\left( q_{1},\cdots ,q_{m}\right) =1,$ and this is the case of
the all mixed $\left( \ell _{1},\ell _{2}\right) $-Littlewood inequalities.

The classical Bohnenblust--Hille exponents, on the other hand, verify $\text{%
diam}\left( q_{1},\cdots ,q_{m}\right) =0.$ Proceeding as in \cite{daniel}
it seems plausible that, when $\text{diam}\left( q_{1},\cdots ,q_{m}\right) $
is close to $1$, optimal constants should grow exponentially. In this
regard, Corollary \ref{21} suggests an interesting parallel between diameter
of the exponent and growth of sharp constants in the Bohnenblust--Hille
inequalities:

\begin{equation*}
\begin{tabular}{|l|l|}
\hline
$\text{Diameter of }\left( q_{1},\cdots,q_{m}\right) $ & $\text{Growth of
the constants}$ \\ \hline
$0\text{ (classical case)}$ & $\text{sublinear }\left( <m^{0.4}\right) $ \\
\hline
$1\text{ (mixed }\left( \ell _{1},\ell _{2}\right) \text{-Littlewood)}$ & $%
\text{Exponential }\left( = \sqrt{2}\right) ^{m-1}.$ \\ \hline
\end{tabular}%
\end{equation*}

\bigskip

The relationship between the growth of the sharp constants and the diameter
of the exponent seems a natural line of investigation. It suggests that the
smaller the diameter, the slower the growth. A quantification of such
implication would shed lights on a number of other issues pertaining to the
theory of Bohnenblust--Hille inequalities.

\section{Estimating the entropy of the classical Bohnenblust--Hille
inequality}

The Hardy--Littlewood inequalities for $m$--linear forms (see \cite{a,
dimant, hardy,pra}) are a sharp generalization of the Bohnenblust--Hille
inequality when we replace $c_{0}$ by $\ell _{p}$. It asserts that for any
integer $m\geq 2$ and $2m\leq p\leq \infty $, there exists a constant $%
C_{m,p}^{\mathbb{K}}\geq 1$ such that,
\begin{equation}
\left( \sum_{j_{1},\cdots,j_{m}=1}^{\infty }\left\vert
T(e_{j_{1}},\cdots,e_{j_{m}})\right\vert ^{\frac{2mp}{mp+p-2m}}\right) ^{%
\frac{mp+p-2m}{2mp}}\leq C_{m,p}^{\mathbb{K}}\left\Vert T\right\Vert ,
\label{i99}
\end{equation}%
for all continuous $m$--linear forms $T\colon \ell _{p}\times \cdots \times
\ell _{p}\rightarrow \mathbb{K}$. The exponent $\frac{2mp}{mp+p-2m}$ is
optimal. Following usual convention in the field, $c_{0}$ is understood as
the proper substitute of $\ell _{\infty }$ when the exponent $p\rightarrow
\infty$. Under such an agreement, one easily checks that taking $p=\infty $
in \eqref{i99} recovers the Bohnenblust--Hille inequality.

Investigations pertaining to the Hardy--Littlewood inequalities are closely
related to phenomena observed in the classical Bohnenblust--Hille
inequality, and the question regarding optimal constants is not different
(see \cite{araujo}). For this reason, and to support future investigations
in the theme, in this section we broaden the definition of entropy to the
domain of Hardy--Littlewood inequalities. The results of this section will
be proved for both settings.

We shall use the notation $\ent_{m}^{HL}\left( \frac{2mp}{mp+p-2m},\cdots ,%
\frac{2mp}{mp+p-2m}\right) $ for the entropy of the Hardy--Littlewood
inequality. Our first lemma, which is of independent interest, is crucial in
this section. It can be understood as a generalization of \cite[Lemma 18.14]%
{diestel} to $\ell _{p}$ spaces, which further sharps the constants to their
optimal values.

\begin{lemma}
\label{lemmaTech} Let $m$ be a positive integer. For all $p>2m,$ we have%
\begin{equation*}
\left( \sum\limits_{i_{1},\ldots ,i_{m}=1}^{\infty }\left\vert
U(e_{i_{^{1}}},\ldots ,e_{i_{m}})\right\vert ^{\frac{2p}{p-2m+2}}\right) ^{%
\frac{p-2m+2}{2p}}\leq \left\Vert U\right\Vert ,
\end{equation*}%
for all continuous $m$--linear forms $U\colon \ell _{p}\times \cdots \times
\ell _{p}\rightarrow \mathbb{K}$.
\end{lemma}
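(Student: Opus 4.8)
The plan is to prove Lemma \ref{lemmaTech} by the same Khinchin/Rademacher averaging argument used earlier in the excerpt to establish \eqref{9m}, adapted to the $\ell_p$ setting where the test vectors live in the unit ball of $\ell_p$ rather than $c_0$. The key difference from \eqref{9m} is that for $c_0$ the Rademacher vectors $\sum_i r_i(t)e_i$ have norm $1$, whereas in $\ell_p$ one cannot simply insert infinitely many signs and keep the norm bounded. First I would reduce to finitely many coordinates: fix $N$ and restrict all sums to $i_1,\ldots,i_m\in\{1,\ldots,N\}$, proving the estimate with a constant independent of $N$ and then letting $N\to\infty$.

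The core step is to interpolate between the two endpoint inequalities, exactly in the spirit of the Hardy--Littlewood/Bohnenblust--Hille machinery. On one hand, Khinchin together with multilinearity gives, for each fixed choice of signs $t_1,\ldots,t_m$,
\begin{equation*}
\left(\sum_{i_1,\ldots,i_m=1}^{N}\left\vert U(e_{i_1},\ldots,e_{i_m})\right\vert^{2}\right)^{1/2}\leq\left(\int_0^1\cdots\int_0^1\left\vert U\Big(\sum_{i_1}r_{i_1}(t_1)e_{i_1},\ldots,\sum_{i_m}r_{i_m}(t_m)e_{i_m}\Big)\right\vert^{2}dt_1\cdots dt_m\right)^{1/2},
\end{equation*}
but now $\big\Vert\sum_{i=1}^{N}r_i(t)e_i\big\Vert_{\ell_p}=N^{1/p}$, so this route alone yields $\Vert U\Vert$ multiplied by $N^{m/p}$. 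On the other hand, the trivial $\ell_\infty$-type bound $\vert U(e_{i_1},\ldots,e_{i_m})\vert\leq\Vert U\Vert$ holds coordinatewise. The plan is to combine these two facts by a H\"older interpolation: the target exponent $\frac{2p}{p-2m+2}$ is engineered precisely so that the $N^{m/p}$ growth from the $\ell_2$-estimate is exactly cancelled. Writing $\frac{2p}{p-2m+2}$ as a convex combination that mixes the exponent $2$ (from Khinchin) against the supremum bound, one extracts the factor that kills the $N$-dependence, leaving the clean constant $1$.

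The main obstacle is the bookkeeping that shows the constant is genuinely $1$ and, crucially, that the exponent $\frac{2p}{p-2m+2}$ is the correct one for the constant to collapse to $1$ rather than to some power of $2$ coming from the Khinchin constant $\sqrt 2$. The subtle point is that the averaging argument as written uses the $L^2$ Khinchin inequality, whose optimal upper constant is $1$ (the right-hand inequality in \eqref{65} has $B_2=1$), so no stray $\sqrt 2$ appears --- this is exactly why the constant can be $1$ and is the sharpening over \cite[Lemma 18.14]{diestel} advertised in the statement. I would verify by a direct arithmetic check that with $p>2m$ the H\"older exponents are admissible (all at least $1$) and that the exponents on $N$ sum to zero. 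Finally, once the finite-dimensional estimate holds uniformly in $N$, monotone convergence delivers the infinite sum, completing the proof for all continuous $m$--linear $U\colon\ell_p\times\cdots\times\ell_p\to\mathbb K$.
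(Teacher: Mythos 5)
There is a genuine gap at the heart of your plan: the claimed cancellation of the $N$-dependence does not occur. Your two endpoints are (i) the Khinchin/Rademacher estimate $\bigl(\sum_{i_1,\ldots,i_m\le N}|U(e_{i_1},\ldots,e_{i_m})|^{2}\bigr)^{1/2}\le N^{m/p}\Vert U\Vert$ (the factor $N^{m/p}$ coming from $\Vert\sum_{i\le N}r_i(t)e_i\Vert_{\ell_p}=N^{1/p}$ in each of the $m$ slots) and (ii) the coordinatewise bound $\sup|U(e_{i_1},\ldots,e_{i_m})|\le\Vert U\Vert$. Interpolating these with $\Vert a\Vert_{s}\le\Vert a\Vert_{2}^{\theta}\Vert a\Vert_{\infty}^{1-\theta}$, where $s=\frac{2p}{p-2m+2}>2$ forces $\theta=\frac{2}{s}=\frac{p-2m+2}{p}$, yields the constant
\begin{equation*}
\left(N^{m/p}\right)^{\theta}=N^{\frac{m(p-2m+2)}{p^{2}}},
\end{equation*}
whose exponent is strictly positive for every $p>2m$ and every $m\ge 1$; it tends to $+\infty$ with $N$, so no uniform bound survives the limit. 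Indeed, no finite exponent $s$ can make this particular interpolation close with constant $1$: the power of $N$ vanishes only as $s\to\infty$. The exponent $\frac{2p}{p-2m+2}$ is not ``engineered'' to cancel an $N^{m/p}$ loss against the sup bound; it arises from a different mechanism entirely.

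The paper's proof is the Hardy--Littlewood coordinate-by-coordinate scheme. One starts from the lossless estimate $\bigl(\sum|U(e_{i_1},\ldots,e_{i_m})|^{2}\bigr)^{1/2}\le\Vert U\Vert$ valid when only the \emph{first} factor is $\ell_p$ and the remaining $m-1$ factors are $c_0$ (a consequence of \cite[Proposition 3.5]{bbb} plus the inclusion $\ell_p\subset\ell_2$ for... rather $p>2$), and then replaces the remaining $c_0$'s by $\ell_p$ one at a time. Each replacement is paid for not by a power of $N$ but by a worsening of the outer exponent from $\lambda_{k-1}$ to $\lambda_{k}=\frac{2p}{p-2k}$, carried out via a duality argument: the $\ell_{(p/\lambda_{k-1})^{\ast}}$-norm is written as a supremum over $y\in B_{\ell_{p/\lambda_{k-1}}}$, the weight $|x_{j_k}|^{\lambda_{k-1}}$ is absorbed into the multilinear form $T_k^{(x)}(z^{(1)},\ldots,z^{(m)})=T_k(z^{(1)},\ldots,xz^{(k)},\ldots,z^{(m)})$, and two applications of H\"older with the exponents $\bigl(\frac{s-\lambda_{k-1}}{\lambda_k-\lambda_{k-1}},\frac{s-\lambda_{k-1}}{s-\lambda_k}\bigr)$ close the induction. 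After $m-1$ steps the exponent has degraded exactly to $\lambda_{m-1}=\frac{2p}{p-2m+2}$, which is where that value of $s$ comes from. If you want to salvage your write-up, you must replace the single global interpolation by this (or an equivalent) inductive duality argument; the reduction to finitely many coordinates and the observation that the $L^2$ Khinchin upper constant is $1$ are fine but do not address the real difficulty.
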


\begin{proof}
We start off by recalling that%
\begin{equation*}
\left( \sum\limits_{i_{1},\ldots ,i_{m}=1}^{\infty }\left\vert
U(e_{i_{^{1}}},\ldots ,e_{i_{m}})\right\vert ^{2}\right) ^{\frac{1}{2}}\leq
\left\Vert U\right\Vert ,
\end{equation*}%
for all continuous $m$--linear forms $U\colon \ell _{2}\times c_{0}\times
\cdots \times c_{0}\rightarrow \mathbb{K}$ (see \cite[Proposition 3.5]{bbb}%
).\ So, since $p>2m,$ it is not difficult to see that
\begin{equation*}
\left( \sum\limits_{i_{1},\ldots ,i_{m}=1}^{\infty }\left\vert
U(e_{i_{^{1}}},\ldots ,e_{i_{m}})\right\vert ^{2}\right) ^{\frac{1}{2}}\leq
\left\Vert U\right\Vert ,
\end{equation*}%
for all continuous $m$--linear forms $U\colon \ell _{p}\times c_{0}\times
\cdots \times c_{0}\rightarrow \mathbb{K}$.

The proof we shall deliver of Lemma \ref{lemmaTech} is based on a technique
that goes back to the paper of Hardy and Littlewood, \cite{hardy}, see also
and \cite{pra}. It consists of analyzing the effect on each one of the $m$
exponents $2$ when we replace $c_{0}$ by $\ell _{p}.$

Let us set $s=\frac{2p}{p-2m+2}$ and $\lambda _{0}=2.$ From the inclusion of
the $\ell _{p}$ spaces and (\ref{9m}) we have%
\begin{equation}
\left( \sum\limits_{j_{i}=1}^{\infty }\left( \sum\limits_{\widehat{j_{i}}%
=1}^{\infty }\left\vert T\left( e_{j_{1}},\cdots ,e_{j_{m}}\right)
\right\vert ^{s}\right) ^{\frac{1}{s}\lambda _{0}}\right) ^{\frac{1}{\lambda
_{0}}}\leq \left\Vert T\right\Vert ,  \label{78}
\end{equation}%
for all $m$--linear forms $T\colon \ell _{p}\times c_{0}\times \cdots \times
c_{0}\rightarrow \mathbb{K}$ and all $i=1,\cdots ,m.$ Hereafter, $%
\sum\limits_{\widehat{j_{i}}=1}^{n}$ means the sum over all $j_{k}$ for all $%
k\neq i.$ Note that $\lambda _{0}<s.$ For for all $j=1,\cdots ,m$ let us set%
\begin{equation*}
\lambda _{j}:=\frac{\lambda _{0}p}{p-\lambda _{0}j}.
\end{equation*}%
It is plain to observe that
\begin{equation*}
\lambda _{k-1}<\lambda _{k}<s
\end{equation*}%
for all $k=1,\cdots ,m-2$ and $\lambda _{m-1}=s.$ In addition, for all $%
j=0,\cdots m-2$, we have
\begin{equation*}
\left( \frac{p}{\lambda _{j}}\right) ^{\ast }=\frac{\lambda _{j+1}}{\lambda
_{j}}.
\end{equation*}%
Now now argue by induction, i.e., assuming for $2\leq k\leq m-1$ that
\begin{equation}
\left( \sum_{j_{i}=1}^{n}\left( \sum_{\widehat{j_{i}}=1}^{n}\left\vert
T_{k-1}(e_{j_{1}},\cdots ,e_{j_{m}})\right\vert ^{s}\right) ^{\frac{1}{s}%
\lambda _{k-1}}\right) ^{\frac{1}{\lambda _{k-1}}}\leq \Vert T_{k-1}\Vert
\label{8880}
\end{equation}%
for all $m$--linear forms $T_{k-1}\colon \underbrace{\ell _{p}\times \cdots
\times \ell _{p}}_{k-1\ \mathrm{times}}\times c_{0}\times \cdots \times
c_{0}\rightarrow \mathbb{K}$, all $i=1,\cdots ,m$ and all positive integers $%
n$, we aim to prove that
\begin{equation*}
\left( \sum_{j_{i}=1}^{n}\left( \sum_{\widehat{j_{i}}=1}^{n}\left\vert
T_{k}(e_{j_{1}},\cdots ,e_{j_{m}})\right\vert ^{s}\right) ^{\frac{1}{s}%
\lambda _{k}}\right) ^{\frac{1}{\lambda _{k}}}\leq \Vert T_{k}\Vert
\end{equation*}%
for all $m$--linear forms $T_{k}\colon \underbrace{\ell _{p}\times \cdots
\times \ell _{p}}_{k\ \mathrm{times}}\times c_{0}\times \cdots \times
c_{0}\rightarrow \mathbb{K}$, all $i=1,\cdots ,m$ and all positive integers $%
n$.

The initial case $k=2$ in (\ref{8880})) is precisely (\ref{78}). Let us then
assume (\ref{8880}) and consider, for $1\leq k\leq m,$ an $m$--linear form $%
T_{k}\in \mathcal{L}(\underbrace{\ell _{p},\cdots ,\ell _{p}}_{k\ \mathrm{%
times}},c_{0},\cdots ,c_{0};\mathbb{K})$ and, for each $x\in B_{\ell _{p}},$
define%
\begin{equation*}
\begin{array}{ccccl}
T_{k}^{(x)} & \colon & \underbrace{\ell _{p}\times \cdots \times \ell _{p}}%
_{k-1\ \mathrm{times}}\times c_{0}\times \cdots \times c_{0} & \rightarrow &
\mathbb{K} \\
&  & (z^{(1)},\cdots ,z^{(m)}) & \mapsto & T_{k}(z^{(1)},\cdots
,z^{(k-1)},xz^{(k)},z^{(k+1)},\cdots ,z^{(m)}),%
\end{array}%
\end{equation*}%
with $xz^{(k)}=(x_{j}z_{j}^{(k)})_{j=1}^{\infty }$. We note that $\Vert
T_{k}\Vert \leq \sup \{\Vert T_{k}^{(x)}\Vert :x\in B_{\ell _{p}}\}$ and by
the induction hypothesis applied to $T_{k}^{(x)}$, we obtain
\begin{equation}
\begin{array}{l}
\displaystyle\left( \sum_{j_{i}=1}^{n}\left( \sum_{\widehat{j_{i}}%
=1}^{n}\left\vert T_{k}\left( e_{j_{1}},\cdots ,e_{j_{m}}\right) \right\vert
^{s}\left\vert x_{j_{k}}\right\vert ^{s}\right) ^{\frac{1}{s}\lambda
_{k-1}}\right) ^{\frac{1}{\lambda _{k-1}}}\vspace{0.2cm} \\
=\displaystyle\left( \sum_{j_{i}=1}^{n}\left( \sum_{\widehat{j_{i}}%
=1}^{n}\left\vert T_{k}\left( e_{j_{1}},\cdots
,e_{j_{k-1}},xe_{j_{k}},e_{j_{k+1}},\cdots ,e_{j_{m}}\right) \right\vert
^{s}\right) ^{\frac{1}{s}\lambda _{k-1}}\right) ^{\frac{1}{\lambda _{k-1}}}%
\vspace{0.2cm} \\
=\displaystyle\left( \sum_{j_{i}=1}^{n}\left( \sum_{\widehat{j_{i}}%
=1}^{n}\left\vert T_{k}^{(x)}\left( e_{j_{1}},\cdots ,e_{j_{m}}\right)
\right\vert ^{s}\right) ^{\frac{1}{s}\lambda _{k-1}}\right) ^{\frac{1}{%
\lambda _{k-1}}}\vspace{0.2cm} \\
\leq \Vert T_{k}^{(x)}\Vert \vspace{0.2cm} \\
\leq \Vert T_{k}\Vert%
\end{array}
\label{guga030}
\end{equation}%
for all $i=1,\cdots ,m$ and all $n.$

Let us first consider the case \noindent\ $i=k.$ Since, as previously
mentioned, $\left( \frac{p}{\lambda _{j-1}}\right) ^{\ast }=\frac{\lambda
_{j}}{\lambda _{j-1}}$, for all $j=1,\cdots ,m$, we have
\begin{equation*}
\begin{array}{l}
\displaystyle\left( \sum_{j_{k}=1}^{n}\left( \sum_{\widehat{j_{k}}%
=1}^{n}\left\vert T_{k}\left( e_{j_{1}},\cdots ,e_{j_{m}}\right) \right\vert
^{s}\right) ^{\frac{1}{s}\lambda _{k}}\right) ^{\frac{1}{\lambda _{k}}}%
\vspace{0.2cm} \\
\displaystyle=\displaystyle\left( \sum_{j_{k}=1}^{n}\left( \sum_{\widehat{%
j_{k}}=1}^{n}\left\vert T_{k}\left( e_{j_{1}},\cdots ,e_{j_{m}}\right)
\right\vert ^{s}\right) ^{\frac{1}{s}\lambda _{k-1}\left( \frac{p}{\lambda
_{k-1}}\right) ^{\ast }}\right) ^{\frac{1}{\lambda _{k-1}}\frac{1}{\left(
\frac{p}{\lambda _{k-1}}\right) ^{\ast }}}\vspace{0.2cm} \\
\displaystyle=\left( \left\Vert \left( \left( \sum_{\widehat{j_{k}}%
=1}^{n}\left\vert T_{k}\left( e_{j_{1}},\cdots ,e_{j_{m}}\right) \right\vert
^{s}\right) ^{\frac{1}{s}\lambda _{k-1}}\right) _{j_{k}=1}^{n}\right\Vert
_{\left( \frac{p}{\lambda _{k-1}}\right) ^{\ast }}\right) ^{\frac{1}{\lambda
_{k-1}}}\vspace{0.2cm} \\
\displaystyle=\left( \sup_{y\in B_{\ell _{\frac{p}{\lambda _{k-1}}%
}}}\sum_{j_{k}=1}^{n}|y_{j_{k}}|\left( \sum_{\widehat{j_{k}}%
=1}^{n}\left\vert T_{k}\left( e_{j_{1}},\cdots ,e_{j_{m}}\right) \right\vert
^{s}\right) ^{\frac{1}{s}\lambda _{k-1}}\right) ^{\frac{1}{\lambda _{k-1}}}%
\vspace{0.2cm} \\
\displaystyle=\left( \sup_{x\in B_{\ell
_{p}}}\sum_{j_{k}=1}^{n}|x_{j_{k}}|^{\lambda _{k-1}}\left( \sum_{\widehat{%
j_{k}}=1}^{n}\left\vert T_{k}\left( e_{j_{1}},\cdots ,e_{j_{m}}\right)
\right\vert ^{s}\right) ^{\frac{1}{s}\lambda _{k-1}}\right) ^{\frac{1}{%
\lambda _{k-1}}}\vspace{0.2cm} \\
\displaystyle=\sup_{x\in B_{\ell _{p}}}\left( \sum_{j_{k}=1}^{n}\left( \sum_{%
\widehat{j_{k}}=1}^{n}\left\vert T_{k}\left( e_{j_{1}},\cdots
,e_{j_{m}}\right) \right\vert ^{s}\left\vert x_{j_{k}}\right\vert
^{s}\right) ^{\frac{1}{s}\lambda _{k-1}}\right) ^{\frac{1}{\lambda _{k-1}}}%
\vspace{0.2cm} \\
\displaystyle\leq \Vert T_{k}\Vert ,%
\end{array}%
\end{equation*}%
where in the last inequality we have used by \eqref{guga030}. Let us now
focus in the remaining cases, namely, when $i\neq k.$ Let $k\in \left\{
1,\cdots ,m-1\right\} $ and, for $i=1,\cdots ,m$ and $n$, denote
\begin{equation*}
S_{i}=\left( \sum_{\widehat{j_{i}}=1}^{n}|T_{k}(e_{j_{1}},\cdots
,e_{j_{m}})|^{s}\right) ^{\frac{1}{s}}.
\end{equation*}%
We then have
\begin{equation*}
\begin{array}{l}
\displaystyle\sum_{j_{i}=1}^{n}\left( \sum_{\widehat{j_{i}}%
=1}^{n}|T_{k}(e_{j_{1}},\cdots ,e_{j_{m}})|^{s}\right) ^{\frac{1}{s}\lambda
_{k}}=\sum_{j_{i}=1}^{n}S_{i}^{\lambda _{k}}=\sum_{j_{i}=1}^{n}\frac{%
S_{i}^{s}}{S_{i}^{s-\lambda _{k}}} \\
\displaystyle=\sum_{j_{i}=1}^{n}\sum_{\widehat{j_{i}}=1}^{n}\frac{%
|T_{k}(e_{j_{1}},\cdots ,e_{j_{m}})|^{s}}{S_{i}^{s-\lambda _{k}}}%
=\sum_{j_{k}=1}^{n}\sum_{\widehat{j_{k}}=1}^{n}\frac{|T_{k}(e_{j_{1}},\cdots
,e_{j_{m}})|^{s}}{S_{i}^{s-\lambda _{k}}}\vspace{0.2cm} \\
\displaystyle=\sum_{j_{k}=1}^{n}\left( \sum_{\widehat{j_{k}}=1}^{n}\frac{%
|T_{k}(e_{j_{1}},\cdots ,e_{j_{m}})|^{\frac{s(s-\lambda _{k})}{s-\lambda
_{k-1}}}}{S_{i}^{s-\lambda _{k}}}|T_{k}(e_{j_{1}},\cdots ,e_{j_{m}})|^{\frac{%
s(\lambda _{k}-\lambda _{k-1})}{s-\lambda _{k-1}}}\right) .%
\end{array}%
\end{equation*}%
Hence, applying H\"{o}lder's inequality with exponents $\left( \frac{%
s-\lambda _{k-1}}{\lambda _{k}-\lambda _{k-1}},\frac{s-\lambda _{k-1}}{%
s-\lambda _{k}}\right) $ and $\left( \frac{\lambda _{k}\left( s-\lambda
_{k-1}\right) }{\lambda _{k-1}\left( s-\lambda _{k}\right) },\frac{\lambda
_{k}\left( s-\lambda _{k-1}\right) }{\left( \lambda _{k}-\lambda
_{k-1}\right) s}\right) $ gives
\begin{align*}
& \displaystyle\sum_{j_{i}=1}^{n}\left( \sum_{\widehat{j_{i}}%
=1}^{n}|T_{k}(e_{j_{1}},\cdots ,e_{j_{m}})|^{s}\right) ^{\frac{1}{s}\lambda
_{k}}\vspace{0.2cm} \\
\displaystyle& \leq \sum_{j_{k}=1}^{n}\left( \left( \sum_{\widehat{j_{k}}%
=1}^{n}\frac{|T_{k}(e_{j_{1}},\cdots ,e_{j_{m}})|^{s}}{S_{i}^{s-\lambda
_{k-1}}}\right) ^{\frac{s-\lambda _{k}}{s-\lambda _{k-1}}}\times \left(
\sum_{\widehat{j_{k}}=1}^{n}|T_{k}(e_{j_{1}},\cdots ,e_{j_{m}})|^{s}\right)
^{\frac{\lambda _{k}-\lambda _{k-1}}{s-\lambda _{k-1}}}\right) \\
\vspace{0.2cm}\displaystyle& \leq \left( \sum_{j_{k}=1}^{n}\left( \sum_{%
\widehat{j_{k}}=1}^{n}\frac{|T_{k}(e_{j_{1}},\cdots ,e_{j_{m}})|^{s}}{%
S_{i}^{s-\lambda _{k-1}}}\right) ^{\frac{\lambda _{k}}{\lambda _{k-1}}%
}\right) ^{\frac{\lambda _{k-1}}{\lambda _{k}}\cdot \frac{s-\lambda _{k}}{%
s-\lambda _{k-1}}}\times \left( \sum_{j_{k}=1}^{n}\left( \sum_{\widehat{j_{k}%
}=1}^{n}|T_{k}(e_{j_{1}},\cdots ,e_{j_{m}})|^{s}\right) ^{\frac{1}{s}\lambda
_{k}}\right) ^{\frac{1}{\lambda _{k}}\cdot \frac{(\lambda _{k}-\lambda
_{k-1})s}{s-\lambda _{k-1}}}.
\end{align*}%
It follows from the estimate already proved in the case $i=k$ that:
\begin{equation}
\displaystyle\left( \sum_{j_{k}=1}^{n}\left( \sum_{\widehat{j_{k}}%
=1}^{n}|T(e_{j_{1}},\cdots ,e_{j_{m}})|^{s}\right) ^{\frac{1}{s}\lambda
_{k}}\right) ^{\frac{1}{\lambda _{k}}\cdot \frac{(\lambda _{k}-\lambda
_{k-1})s}{s-\lambda _{k-1}}}\leq \Vert T\Vert ^{\frac{(\lambda _{k}-\lambda
_{k-1})s}{s-\lambda _{k-1}}},  \label{huhi}
\end{equation}%
Now, applying H\"{o}lder's inequality for $\left( \frac{s}{s-\lambda _{k-1}},%
\frac{s}{\lambda _{k-1}}\right) $ together with \eqref{guga030} yields
\begin{equation}
\begin{array}{l}
\displaystyle\left( \sum_{j_{k}=1}^{n}\left( \sum_{\widehat{j_{k}}=1}^{n}%
\frac{|T_{k}(e_{j_{1}},\cdots ,e_{j_{m}})|^{s}}{S_{i}^{s-\lambda _{k-1}}}%
\right) ^{\frac{\lambda _{k}}{\lambda _{k-1}}}\right) ^{\frac{\lambda _{k-1}%
}{\lambda _{k}}}=\left\Vert \left( \sum_{\widehat{j_{k}}}\frac{%
|T_{k}(e_{j_{1}},\cdots ,e_{j_{m}})|^{s}}{S_{i}^{s-\lambda _{k-1}}}\right)
_{j_{k}=1}^{n}\right\Vert _{\left( \frac{p}{\lambda _{k-1}}\right) ^{\ast }}%
\vspace{0.2cm} \\
\displaystyle=\sup_{y\in B_{\ell _{\frac{p}{\lambda _{k-1}}%
}}}\sum_{j_{k}=1}^{n}|y_{j_{k}}|\sum_{\widehat{j_{k}}=1}^{n}\frac{%
|T_{k}(e_{j_{1}},\cdots ,e_{j_{m}})|^{s}}{S_{i}^{s-\lambda _{k-1}}}%
=\sup_{x\in B_{\ell _{p}^{n}}}\sum_{j_{k}=1}^{n}\sum_{\widehat{j_{k}}=1}^{n}%
\frac{|T_{k}(e_{j_{1}},\cdots ,e_{j_{m}})|^{s}}{S_{i}^{s-\lambda _{k-1}}}%
|x_{j_{k}}|^{\lambda _{k-1}}\vspace{0.2cm} \\
\displaystyle=\sup_{x\in B_{\ell _{p}}}\sum_{j_{i}=1}^{n}\sum_{\widehat{j_{i}%
}=1}^{n}\frac{|T_{k}(e_{j_{1}},\cdots ,e_{j_{m}})|^{s-\lambda _{k-1}}}{%
S_{i}^{s-\lambda _{k-1}}}|T_{k}(e_{j_{1}},\cdots ,e_{j_{m}})|^{\lambda
_{k-1}}|x_{j_{k}}|^{\lambda _{k-1}}\vspace{0.2cm} \\
\displaystyle\leq \sup_{x\in B_{\ell _{p}}}\sum_{j_{i}=1}^{n}\left( \left(
\sum_{\widehat{j_{i}}=1}^{n}\frac{|T_{k}(e_{j_{1}},\cdots ,e_{j_{m}})|^{s}}{%
S_{i}^{s}}\right) ^{\frac{s-\lambda _{k-1}}{s}}\times \left( \sum_{\widehat{%
j_{i}}=1}^{n}|T_{k}(e_{j_{1}},\cdots ,e_{j_{m}})|^{s}|x_{j_{k}}|^{s}\right)
^{\frac{1}{s}\lambda _{k-1}}\vspace{0.2cm}\right) \\
\displaystyle=\sup_{x\in B_{\ell _{p}}}\sum_{j_{i}=1}^{n}\left( \sum_{%
\widehat{j_{i}}=1}^{n}|T_{k}(e_{j_{1}},\cdots
,e_{j_{m}})|^{s}|x_{j_{k}}|^{s}\right) ^{\frac{1}{s}\lambda _{k-1}}\leq
\Vert T_{k}\Vert ^{\lambda _{k-1}}.%
\end{array}
\label{huho}
\end{equation}%
Combining \eqref{huhi} and \eqref{huho} gives
\begin{equation*}
\displaystyle\sum_{j_{i}=1}^{n}\left( \sum_{\widehat{j_{i}}%
=1}^{n}|T_{k}(e_{j_{1}},\cdots ,e_{j_{m}})|^{s}\right) ^{\frac{1}{s}\lambda
_{k}}\leq \Vert T_{k}\Vert ^{\lambda _{k-1}\frac{s-\lambda _{k}}{s-\lambda
_{k-1}}}\Vert T_{k}\Vert ^{\frac{(\lambda _{k}-\lambda _{k-1})s}{s-\lambda
_{k-1}}}=\Vert T_{k}\Vert ^{\lambda _{k}}.
\end{equation*}%
It finally remains to verify the case $k=m-1$. Since $\lambda _{m-1}=s,$
applying the case $i=k$ we can estimate%
\begin{equation*}
\left( \sum_{j_{i}=1}^{n}\left( \sum_{\widehat{j_{i}}%
=1}^{n}|T_{k}(e_{j_{1}},\cdots ,e_{j_{m}})|^{s}\right) ^{\frac{1}{s}\lambda
_{m}}\right) ^{\frac{1}{s}}=\left( \sum_{j_{m}=1}^{n}\left( \sum_{\widehat{%
j_{m}}=1}^{n}|T_{k}(e_{j_{1}},\cdots ,e_{j_{m}})|^{s}\right) ^{\frac{1}{s}%
\lambda _{m}}\right) ^{\frac{1}{s}}\leq \Vert T_{k}\Vert ,
\end{equation*}%
and the proof of the Lemma is complete.
\end{proof}

We can now establish the following lower bound estimate for the entropy in
the Hardy-Littlewood inequality. {First, we recall a result from \cite%
{campos}, which will be used herein as technical lemma and we sketch its
proof for the sake of completeness:}

\begin{lemma}
\label{4444} Let $m\geq 2$ and $p\geq 2m$. The optimal constants of the
Hardy-Littlewood inequalities satisfies
\begin{equation*}
C_{\mathbb{R},m,p}\geq \frac{2^{\frac{2mp+2m-p-2m^{2}}{mp}}}{\sup_{x\in
\lbrack 0,1]}\frac{((1+x)^{p^{\ast }}+(1-x)^{p^{\ast }})^{\frac{1}{p^{\ast }}%
}}{{(1+x^{p})^{1/p}}}}.
\end{equation*}
\end{lemma}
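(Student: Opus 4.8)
The plan is to bound $C_{\mathbb{R},m,p}$ from below the standard way: exhibit one explicit real $m$--linear form $T\colon \ell_p\times\cdots\times\ell_p\to\mathbb{R}$ and evaluate the quotient
\[
\frac{\left(\sum_{j_1,\dots,j_m=1}^{\infty}\left\vert T(e_{j_1},\dots,e_{j_m})\right\vert^{\frac{2mp}{mp+p-2m}}\right)^{\frac{mp+p-2m}{2mp}}}{\Vert T\Vert},
\]
which, by the very definition of the optimal constant in \eqref{i99}, is a lower bound for $C_{\mathbb{R},m,p}$. I would take $T$ to be the $m$--linear Littlewood-type form generated by the $2\times 2$ block $T_2(u,v)=u_1v_1+u_1v_2+u_2v_1-u_2v_2$, extended to $m$ variables in the non-symmetric tensorial manner already used to build the forms $S_m$. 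Since all its coefficients are $\pm 1$, the numerator is just $\left(\#\{\text{monomials}\}\right)^{\frac{mp+p-2m}{2mp}}$, a pure power of $2$; this is the origin of the factor $2^{\frac{2mp+2m-p-2m^2}{mp}}$, so the numerator side is a routine combinatorial count. Throughout, the hypothesis $p\ge 2m$ is what keeps the exponent $\tfrac{2mp}{mp+p-2m}$ (and the auxiliary exponents) admissible.

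The decisive ingredient is the operator norm, and the denominator is most transparent in the base case $m=2$. For $T_2$ I would freeze $v$ and use that, as a linear functional of $u$, its norm on $B_{\ell_p}$ equals the $\ell_{p^{\ast}}$--norm of the coefficient vector $(v_1+v_2,\,v_1-v_2)$; this is exactly where the conjugate exponent $p^{\ast}$ enters. Maximizing $\left(\vert v_1+v_2\vert^{p^{\ast}}+\vert v_1-v_2\vert^{p^{\ast}}\right)^{1/p^{\ast}}$ over $\Vert v\Vert_p\le 1$, one may by symmetry and homogeneity take $v=(1+x^p)^{-1/p}(1,x)$ with $x\in[0,1]$, and the maximum becomes precisely
\[
\Vert T_2\Vert=\sup_{x\in[0,1]}\frac{\left((1+x)^{p^{\ast}}+(1-x)^{p^{\ast}}\right)^{1/p^{\ast}}}{(1+x^p)^{1/p}},
\]
the denominator of the asserted bound. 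Combined with $4^{\frac{mp+p-2m}{2mp}}\big\vert_{m=2}=2^{\frac{3p-4}{2p}}=2^{\frac{2mp+2m-p-2m^2}{mp}}\big\vert_{m=2}$, this already proves the statement for $m=2$.

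For general $m$ the plan is to propagate this computation through the recursion: freezing one distinguished variable should reduce $\Vert T\Vert$, via the same $\ell_{p^{\ast}}$--duality, to the two-dimensional optimization producing a single copy of the supremum above, with the remaining variables contributing only an explicit elementary factor; matching this against the monomial count then yields the stated quotient, and letting $p\to\infty$ (so $p^{\ast}\to 1$ and the denominator tends to $2$) recovers the Bohnenblust--Hille lower bound $2^{1-\frac{1}{m}}$, which is a reassuring consistency check. The hard part is exactly this norm computation for $m\ge 3$: a fully symmetric tensorial extension would make the supremum \emph{factor across all variables} and thus replace the denominator by its $(m-1)$--th power, so the extension must be engineered so that the $\ell_p$/$\ell_{p^{\ast}}$ asymmetry collapses onto a single factor while the other variables contribute an exactly computable constant. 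Verifying that the defining supremum factors in this precise way — rather than the coefficient count or the one-variable calculus problem, both of which are mechanical — is where the real work lies.
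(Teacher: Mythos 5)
Your choice of extremal form, the monomial count in the numerator, and the duality computation of $\Vert T_{2,p}\Vert$ via $\Psi(T_{2,p})(x)=(x_{1}+x_{2},x_{1}-x_{2})\in\ell_{p^{\ast}}^{2}$ all coincide with the paper's argument, and your $p\to\infty$ consistency check is correct. However, the step you defer as ``where the real work lies'' --- controlling $\Vert T_{m,p}\Vert$ for $m\geq 3$ --- is left genuinely open in your proposal, and your framing of it is off: you do not need the defining supremum to ``factor'' onto a single variable, nor do you need an exact norm computation. Since the goal is a \emph{lower} bound on $C_{\mathbb{R},m,p}$, any \emph{upper} bound on $\Vert T_{m,p}\Vert$ suffices, and the recursive structure hands you one for free. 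Writing
\begin{equation*}
T_{m,p}(x^{(1)},\dots,x^{(m)})=(x_{1}^{(m)}+x_{2}^{(m)})\,T_{m-1,p}(\cdots)+(x_{1}^{(m)}-x_{2}^{(m)})\,T_{m-1,p}(B^{2^{m-1}}(x^{(1)}),\dots),
\end{equation*}
one bounds each block by $\Vert T_{m-1,p}\Vert\prod_{j<m}\Vert x^{(j)}\Vert_{p}$ (the backward shifts do not increase $\ell_{p}$ norms) and then uses the elementary identity
\begin{equation*}
\vert x_{1}^{(m)}+x_{2}^{(m)}\vert+\vert x_{1}^{(m)}-x_{2}^{(m)}\vert=2\max\bigl(\vert x_{1}^{(m)}\vert,\vert x_{2}^{(m)}\vert\bigr)\leq 2\Vert x^{(m)}\Vert_{p},
\end{equation*}
giving $\Vert T_{m,p}\Vert\leq 2\Vert T_{m-1,p}\Vert$ and hence $\Vert T_{m,p}\Vert\leq 2^{m-2}\Vert T_{2,p}\Vert$: each added variable contributes a factor at most $2$, and only the base case carries the Clarkson-type supremum. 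Dividing $(4^{m-1})^{\frac{mp+p-2m}{2mp}}$ by $2^{m-2}\Vert T_{2,p}\Vert$ yields exactly the stated exponent $\frac{2mp+2m-p-2m^{2}}{mp}$. So the gap is not a hard factorization lemma but a missing one-line triangle-inequality estimate; without it, your proof is incomplete, and the route you sketch (propagating the $\ell_{p^{\ast}}$-duality through all $m$ variables) would be both harder and unnecessary.
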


\begin{proof}
(Sketch) Consider the natural isometric isomorphism $\Psi \colon \mathcal{L}%
\left( ^{2}\ell _{p}^{2};\mathbb{R}\right) \rightarrow \mathcal{L}\left(
\ell _{p}^{2};\left( \ell _{p}^{2}\right) ^{\ast }\right) .$ For
\begin{equation*}
\begin{array}{ccccl}
T_{2,p} & \colon & \ell _{p}^{2}\times \ell _{p}^{2} & \rightarrow & \mathbb{%
R} \\
&  & ((x_{i}^{(1)}),(x_{i}^{(2)})) & \mapsto &
x_{1}^{(1)}x_{1}^{(2)}+x_{1}^{(1)}x_{2}^{(2)}+x_{2}^{(1)}x_{1}^{(2)}-x_{2}^{(1)}x_{2}^{(2)},%
\end{array}%
\end{equation*}%
we have
\begin{equation*}
\begin{array}{ccccl}
\Psi (T_{2,p}) & \colon & \ell _{p}^{2} & \rightarrow & \ell _{p^{\ast }}^{2}
\\
&  & (x_{i}) & \mapsto & (x_{1}+x_{2},x_{1}-x_{2}).%
\end{array}%
\end{equation*}%
Therefore%
\begin{equation*}
\Vert T_{2,p}\Vert =\left\Vert \Psi (T_{2,p})\right\Vert =\sup_{x\in \lbrack
0,1]}\frac{((1+x)^{p^{\ast }}+(1-x)^{p^{\ast }})^{\frac{1}{p^{\ast }}}}{%
(1+x^{p})^{1/p}},
\end{equation*}%
where the norm of the linear operator $\Psi (T_{2,p})$ is performed by using
the best constants from the Clarkson's inequality in the real case (see \cite%
[Theorem 2.1]{mali}). Defining inductively
\begin{equation*}
\begin{array}{rccl}
T_{m,p} \colon & \ell _{p}^{2^{m-1}}\times \cdots \times \ell _{p}^{2^{m-1}}
& \rightarrow & \mathbb{R} \\
& (x^{(1)},\cdots,x^{(m)}) & \mapsto &
(x_{1}^{(m)}+x_{2}^{(m)})T_{m-1,p}(x^{(1)},\cdots,x^{(m)}) \\
&  &  & +(x_{1}^{(m)}-x_{2}^{(m)})T_{m-1,p}(B^{2^{m-1}}(x^{(1)}),%
\cdots,B^{2}(x^{(m-1)})),%
\end{array}%
\end{equation*}%
where $x^{(k)}=(x_{j}^{(k)})_{j=1}^{{2^{m-1}}}\in \ell _{p}^{2^{m-1}}$, $%
1\leq k\leq m$, and $B$ is the backward shift operator in $\ell
_{p}^{2^{m-1}}$, we have
\begin{align*}
|T_{m,p}(x^{(1)},\cdots,x^{(m)})|& \leq
|x_{1}^{(m)}+x_{2}^{(m)}||T_{m-1,p}(x^{(1)},\cdots,x^{(m)})| \\
&
+|x_{1}^{(m)}-x_{2}^{(m)}||T_{m-1,p}(B^{2^{m-1}}(x^{(1)}),B^{2^{m-2}}(x^{(2)}),\cdots,B^{2}(x^{(m-1)}))|
\\
& \leq \Vert T_{m-1,p}\Vert
(|x_{1}^{(m)}+x_{2}^{(m)}|+|x_{1}^{(m)}-x_{2}^{(m)}|) \\
& \leq 2\Vert T_{m-1,p}\Vert \Vert x^{(m)}\Vert _{p},
\end{align*}%
i.e.,
\begin{equation*}
\Vert T_{m,p}\Vert \leq 2^{m-2}\sup_{x\in \lbrack 0,1]}\frac{((1+x)^{p^{\ast
}}+(1-x)^{p^{\ast }})^{\frac{1}{p^{\ast }}}}{(1+x^{p})^{1/p}}
\end{equation*}%
and hence
\begin{equation*}
C_{\mathbb{R},m,p}\geq \frac{(4^{m-1})^{\frac{mp+p-2m}{2mp}}}{2^{m-2}\Vert
T_{2,p}\Vert }=\frac{2^{\frac{2mp+2m-p-2m^{2}}{mp}}}{\sup_{x\in \lbrack 0,1]}%
\frac{\left( \left( 1+x\right) ^{p^{\ast }}+\left( 1-x\right) ^{p^{\ast
}}\right) ^{1/p^{\ast }}}{\left( 1+x^{p}\right) ^{1/p}}}.
\end{equation*}
\end{proof}

\begin{theorem}
\label{999}For $m\geq 2$ and $p>2m,$ there holds
\begin{equation*}
\ent_{m}^{HL}\left( \frac{2mp}{mp+p-2m},\cdots ,\frac{2mp}{mp+p-2m}\right)
\geq 2^{\frac{2\left( p-2m\right) \left( m-1\right) }{2m^{2}+p-4m}}.
\end{equation*}
\end{theorem}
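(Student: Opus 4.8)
The plan is to run the sparsity argument that produced \eqref{ot} in the uniform-exponent Hardy--Littlewood setting, replacing the $\ell_{2}$-estimate \eqref{9m} by its $\ell_{p}$-analogue (Lemma \ref{lemmaTech}), and then to feed in the lower bound for the optimal constant from Lemma \ref{4444}. Write $\rho=\frac{2mp}{mp+p-2m}$ for the Hardy--Littlewood exponent and $s=\frac{2p}{p-2m+2}$ for the exponent appearing in Lemma \ref{lemmaTech}. The whole argument pivots on $\rho<s$, which follows from the direct computation $\frac{1}{\rho}-\frac{1}{s}=\frac{2m^{2}+p-4m}{2mp}>0$.

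First I would extract a constraint on $C_{\mathbb{R},m,p}$ from any extremum. Let $T$ be an extremum assembled from exactly $k$ monomials and list its nonzero coefficients as $a_{\alpha}=T(e_{j_{1}},\cdots,e_{j_{m}})$. Because $\rho<s$ and the support has size $k$, the power-mean inequality yields
\[
\left(\sum_{\alpha}|a_{\alpha}|^{\rho}\right)^{1/\rho}\leq k^{\frac{1}{\rho}-\frac{1}{s}}\left(\sum_{\alpha}|a_{\alpha}|^{s}\right)^{1/s}.
\]
Lemma \ref{lemmaTech} bounds the last factor by $\Vert T\Vert$, whereas extremality forces the left-hand side to equal $C_{\mathbb{R},m,p}\Vert T\Vert$. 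Cancelling $\Vert T\Vert$ gives $C_{\mathbb{R},m,p}\leq k^{\frac{1}{\rho}-\frac{1}{s}}$, and since $\frac{1}{\rho}-\frac{1}{s}=\frac{2m^{2}+p-4m}{2mp}$, rearranging produces $k\geq C_{\mathbb{R},m,p}^{\frac{2mp}{2m^{2}+p-4m}}$. As this holds for every extremum, the same lower bound holds for $\ent_{m}^{HL}(\rho,\cdots,\rho)$, the statement being vacuous if no extremum exists.

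It then remains to insert a clean lower bound for $C_{\mathbb{R},m,p}$. Lemma \ref{4444} gives $C_{\mathbb{R},m,p}\geq 2^{\frac{2mp+2m-p-2m^{2}}{mp}}/\Vert T_{2,p}\Vert$, and the exponent factors as $\frac{2mp+2m-p-2m^{2}}{mp}=\frac{(p-2m)(m-1)}{mp}+1$. Hence it suffices to verify $\Vert T_{2,p}\Vert\leq 2$, so that the surplus factor $2/\Vert T_{2,p}\Vert$ is at least $1$ and $C_{\mathbb{R},m,p}\geq 2^{\frac{(p-2m)(m-1)}{mp}}$. This bound on $\Vert T_{2,p}\Vert$ is immediate from the explicit expression computed in Lemma \ref{4444}: for $x\in[0,1]$ the numerator $((1+x)^{p^{\ast}}+(1-x)^{p^{\ast}})^{1/p^{\ast}}$ is the $\ell_{p^{\ast}}$-norm of $(1+x,1-x)$, hence bounded by its $\ell_{1}$-norm $2$ since $p^{\ast}\geq 1$, while the denominator $(1+x^{p})^{1/p}\geq 1$.

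Combining the two steps gives $\ent_{m}^{HL}(\rho,\cdots,\rho)\geq C_{\mathbb{R},m,p}^{\frac{2mp}{2m^{2}+p-4m}}\geq 2^{\frac{(p-2m)(m-1)}{mp}\cdot\frac{2mp}{2m^{2}+p-4m}}=2^{\frac{2(p-2m)(m-1)}{2m^{2}+p-4m}}$, which is the claim. The only place demanding genuine care is the first step: one must confirm that Lemma \ref{lemmaTech} is legitimately available at exponent $s$ (this needs exactly $p>2m$) and that the sparsity factor $k^{1/\rho-1/s}$ lands on the correct side of the inequality; both hinge on the single verification $\rho<s$, after which the remainder is pure exponent bookkeeping.
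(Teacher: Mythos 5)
Your proposal is correct and follows essentially the same route as the paper: apply Lemma \ref{lemmaTech} at the exponent $s=\frac{2p}{p-2m+2}$, use H\"older (your power-mean step) on the $k$-element support to pass to the Hardy--Littlewood exponent at the cost of $k^{1/\rho-1/s}$, and feed in the lower bound from Lemma \ref{4444} after bounding the supremum there by $2$. The exponent bookkeeping matches the paper's, so nothing further is needed.
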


\begin{proof}
Our starting point is the thesis of previous Lemma which assures that
\begin{equation*}
\left( \sum\limits_{i_{1},\ldots ,i_{m}=1}^{\infty }\left\vert
U(e_{i_{^{1}}},\ldots ,e_{i_{m}})\right\vert ^{\frac{2p}{p-2m+2}}\right) ^{%
\frac{p-2m+2}{2p}}\leq \left\Vert U\right\Vert ,
\end{equation*}%
for all continuous $m$--linear forms $U\colon \ell _{p}\times \cdots \times
\ell _{p}\rightarrow \mathbb{R}$. By H\"{o}lder inequality, if an extremum $U
$ is composed by the sum of exactly $k$ monomials, we have

\begin{eqnarray*}
&&\left( \sum\limits_{i_{1},\ldots ,i_{m}=1}^{\infty }\left\vert
U(e_{i_{^{1}}},\ldots ,e_{i_{m}})\right\vert ^{\frac{2mp}{mp+p-2m}}\right) ^{%
\frac{mp+p-2m}{2mp}} \\
&\leq &\left( \left( \sum\limits_{i_{1},\ldots ,i_{m}=1}^{\infty }\left\vert
U(e_{i_{^{1}}},\ldots ,e_{i_{m}})\right\vert ^{\frac{2mp}{mp+p-2m}\times
\frac{p-2m+mp}{m\left( p-2m+2\right) }}\right) ^{\frac{m\left( p-2m+2\right)
}{p-2m+mp}}\left( \sum\limits_{i_{1},\ldots ,i_{m}=1}^{\infty }1^{\frac{%
p-2m+mp}{2m^{2}-4m+p}}\right) ^{\frac{2m^{2}-4m+p}{p-2m+mp}}\right) ^{\frac{%
mp+p-2m}{2mp}} \\
&\leq &\left\Vert U\right\Vert k^{\frac{2m^{2}-4m+p}{2mp}}.
\end{eqnarray*}

By Lemma \ref{4444} we have
\begin{equation*}
C_{m,p}\geq \frac{2^{\frac{2mp+2m-p-2m^{2}}{mp}}}{\sup_{x\in \lbrack 0,1]}%
\frac{\left( \left( 1+x\right) ^{p^{\ast }}+\left( 1-x\right) ^{p^{\ast
}}\right) ^{1/p^{\ast }}}{\left( 1+x^{p}\right) ^{1/p}}}\geq \frac{2^{\frac{%
2mp+2m-p-2m^{2}}{mp}}}{2}.
\end{equation*}%
Last estimate finally yields%
\begin{equation*}
k^{\frac{2m^{2}-4m+p}{2mp}}\geq 2^{\frac{2mp+2m-p-2m^{2}}{mp}-1},
\end{equation*}%
and hence%
\begin{equation*}
k\geq 2^{\frac{2\left( p-2m\right) \left( m-1\right) }{2m^{2}+p-4m}},
\end{equation*}%
which concludes the proof of the current theorem.
\end{proof}

\begin{corollary}
\label{66} For $m\geq 2,$
\begin{equation*}
\ent_{m}\left( \frac{2m}{m+1},\cdots ,\frac{2m}{m+1}\right) \geq 4^{m-1}.
\end{equation*}
\end{corollary}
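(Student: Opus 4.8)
The plan is to handle the classical case as the endpoint $p=\infty$ of the Hardy--Littlewood scale, reusing the architecture of the proof of Theorem \ref{999} but anchored on the $c_0$-estimate \eqref{9m} in place of Lemma \ref{lemmaTech}. I would deliberately avoid deducing the result by passing to the limit $p\to\infty$ in the conclusion of Theorem \ref{999}: the Hardy--Littlewood entropies at finite $p$ are distinct quantities from the Bohnenblust--Hille entropy, and in any case the numerical bound $2^{2(p-2m)(m-1)/(2m^2+p-4m)}$ stays strictly below $4^{m-1}$ for every finite $p>2m$, approaching it only in the limit. Since the entropy is integer-valued, such a limit would be inconclusive, and the endpoint must be treated on its own footing.

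The core is a single H\"older interpolation. Let $U\colon c_0\times\cdots\times c_0\to\mathbb{R}$ be an extremum of the classical inequality that is a sum of exactly $k$ monomials, so its coefficient array $(U(e_{j_1},\cdots,e_{j_m}))$ has $k$ nonzero entries. Because $\tfrac{2m}{m+1}<2$, interpolating the $\ell_{2m/(m+1)}$-norm of this array against its $\ell_2$-norm produces the factor $k^{1/q-1/2}=k^{1/(2m)}$, with $q=\tfrac{2m}{m+1}$. Bounding the $\ell_2$-norm by $\|U\|$ via \eqref{9m} and using that $U$ is extremal, I would arrive at
\begin{equation*}
B_{\mathbb{R},m}\,\|U\|=\left(\sum_{j_1,\cdots,j_m=1}^{\infty}\left|U(e_{j_1},\cdots,e_{j_m})\right|^{\frac{2m}{m+1}}\right)^{\frac{m+1}{2m}}\leq k^{\frac{1}{2m}}\,\|U\|,
\end{equation*}
so that $k\geq B_{\mathbb{R},m}^{\,2m}$.

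It then remains to feed in the sharp lower bound $B_{\mathbb{R},m}\geq 2^{1-\frac{1}{m}}$. I would extract this from the standard form $S_m$ assembled in the proof of Theorem \ref{8800}, which is a sum of $4^{m-1}$ unimodular monomials with $\|S_m\|=2^{m-1}$; evaluating the classical left-hand side on $S_m$ gives $(4^{m-1})^{(m+1)/(2m)}=2^{m-1/m}$, whence $B_{\mathbb{R},m}\geq 2^{m-1/m}/2^{m-1}=2^{1-1/m}$. This is also precisely the $p\to\infty$ limit of Lemma \ref{4444}. Substituting yields $k\geq (2^{1-1/m})^{2m}=2^{2m-2}=4^{m-1}$, which is the claim.

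The main obstacle is conceptual rather than computational: one must verify that both ingredients genuinely persist at the endpoint $p=\infty$, namely that the $\ell_2$-estimate \eqref{9m} holds verbatim for $c_0$-forms (it does, as a consequence of Khinchin/cotype) and that the extremal lower bound $2^{1-1/m}$ is realized at $p=\infty$ by $S_m$. Once these two endpoint facts are secured, the H\"older step is entirely routine and the integer value $4^{m-1}$ emerges with no loss, the positive slack $2m(m-1)$ noted above being exactly what evaporates as $p\to\infty$.
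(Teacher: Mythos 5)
Your proposal is correct and follows essentially the same route as the paper: Corollary \ref{66} is precisely the endpoint case $p=\infty$ of the argument proving Theorem \ref{999}, with the $\ell_2$-estimate \eqref{9m} playing the role of Lemma \ref{lemmaTech}, the H\"older step yielding the factor $k^{1/(2m)}$, and the lower bound $B_{\mathbb{R},m}\geq 2^{1-1/m}$ (the $p\to\infty$ form of Lemma \ref{4444}, realized by $S_m$) closing the computation. Your observation that one cannot simply pass to the limit in the conclusion of Theorem \ref{999} is a fair and worthwhile caveat, but it does not change the substance of the argument.
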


\bigskip The following two corollaries are immediate consequences of the
proof of Theorem \ref{999}:

\begin{corollary}
\label{7y}Let $m\geq 2$ be an integer and $K\geq 1$ be a real number. If we
are restricted to $m$--linear forms with up to $K^{m}$ monomials, then the
optimal constants of the $m$--linear Bohnenblust--Hille inequalities are
bounded by $\sqrt{K}$.
\end{corollary}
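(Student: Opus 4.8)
The plan is to run, in the limiting case $p=\infty$ corresponding to the classical $c_{0}$ setting, precisely the H\"{o}lder interpolation that concludes the proof of Theorem~\ref{999}. The one ingredient needed is the universal $\ell_{2}$-estimate (\ref{9m}), namely
\begin{equation*}
\left( \sum_{j_{1},\ldots ,j_{m}=1}^{\infty }\left\vert T(e_{j_{1}},\ldots ,e_{j_{m}})\right\vert ^{2}\right) ^{1/2}\leq \left\Vert T\right\Vert ,
\end{equation*}
valid for every continuous $m$--linear form $T\colon c_{0}\times \cdots \times c_{0}\rightarrow \mathbb{K}$. This is exactly the role that Lemma~\ref{lemmaTech} played for the Hardy--Littlewood exponents, and crucially its constant does not grow with $m$.

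Now let $T$ be an $m$--linear form supported on at most $K^{m}$ monomials, i.e.\ such that the set $A=\{(j_{1},\ldots ,j_{m}):\alpha _{j_{1}\cdots j_{m}}\neq 0\}$ has $\card(A)\leq K^{m}$. Since the Bohnenblust--Hille exponent $\frac{2m}{m+1}$ is strictly below $2$, I would apply H\"{o}lder's inequality over the finite index set $A$, writing each summand as $\vert T(e_{j_{1}},\ldots ,e_{j_{m}})\vert ^{2m/(m+1)}\cdot 1$ and using the conjugate pair $\left( \frac{m+1}{m},\,m+1\right) $. This produces
\begin{equation*}
\left( \sum_{A}\left\vert T(e_{j_{1}},\ldots ,e_{j_{m}})\right\vert ^{\frac{2m}{m+1}}\right) ^{\frac{m+1}{2m}}\leq \left( \sum_{A}\left\vert T(e_{j_{1}},\ldots ,e_{j_{m}})\right\vert ^{2}\right) ^{1/2}\left( \card(A)\right) ^{\frac{1}{2m}}.
\end{equation*}

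The only point requiring care is the exponent bookkeeping: the power of $\card(A)$ comes out to be $\frac{1}{m+1}\cdot \frac{m+1}{2m}=\frac{1}{2m}$, so that $\left( \card(A)\right) ^{1/(2m)}\leq (K^{m})^{1/(2m)}=\sqrt{K}$. Combining this with the $\ell_{2}$-estimate (\ref{9m}) to control the middle factor by $\Vert T\Vert $, I obtain
\begin{equation*}
\left( \sum_{j_{1},\ldots ,j_{m}=1}^{\infty }\left\vert T(e_{j_{1}},\ldots ,e_{j_{m}})\right\vert ^{\frac{2m}{m+1}}\right) ^{\frac{m+1}{2m}}\leq \sqrt{K}\,\Vert T\Vert ,
\end{equation*}
which is precisely the asserted bound on the optimal Bohnenblust--Hille constant when the competing forms carry at most $K^{m}$ monomials.

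I do not expect any genuine obstacle here: the content lies entirely in the observation that the number of monomials is the sole quantity governing the H\"{o}lder loss between the exponents $\frac{2m}{m+1}$ and $2$, so that an exponential ($K^{m}$) bound on the support translates into a uniform ($\sqrt{K}$) bound on the constant. As a sanity check, taking $K=4^{1-1/m}$, so that $K^{m}=4^{m-1}$, recovers the value $2^{1-\frac{1}{m}}$ advertised in the Universality Conjecture for forms of entropy $4^{m-1}$.
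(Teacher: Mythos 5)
Your proof is correct and is essentially the paper's own argument: the authors deduce this corollary as the $p=\infty$ limiting case of the H\"older computation in the proof of Theorem~\ref{999}, using the $\ell_2$-estimate (\ref{9m}) in place of Lemma~\ref{lemmaTech} and picking up exactly the factor $(\card A)^{1/(2m)}\leq\sqrt{K}$ you compute. No discrepancies.
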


\begin{corollary}
If we are restricted to $m$--linear forms with up to $4^{m-1}$ monomials,
the optimal constants of the $m$--linear Bohnenblust--Hille inequalities are
precisely $2^{1-\frac{1}{m}}$.
\end{corollary}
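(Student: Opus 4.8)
The plan is to prove the stated equality by establishing two matching bounds: an upper bound valid for \emph{every} admissible form, and a lower bound witnessed by a single explicit form. Since the restriction is to $m$--linear forms with at most $4^{m-1}$ monomials, both bounds should land exactly on $2^{1-\frac{1}{m}}$, and the corollary will then be immediate.

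For the upper bound I would simply invoke Corollary \ref{7y} with the calibrated choice $K=4^{\frac{m-1}{m}}$, which is admissible since $K\geq 1$ for all $m\geq 2$. With this choice one has $K^{m}=4^{m-1}$, so the hypothesis ``up to $K^{m}$ monomials'' is exactly ``up to $4^{m-1}$ monomials'', and the conclusion reads
\[
\sqrt{K}=4^{\frac{m-1}{2m}}=2^{1-\frac{1}{m}}.
\]
Equivalently, this is nothing but the H\"older step appearing in the proof of Theorem \ref{999} specialized to the classical exponent (formally $p\to\infty$, $s\to 2$): if $U$ is a sum of $k$ monomials, then combining H\"older's inequality with \eqref{9m} yields
\[
\left(\sum_{j_{1},\cdots,j_{m}=1}^{\infty}\left\vert U(e_{j_{1}},\cdots,e_{j_{m}})\right\vert^{\frac{2m}{m+1}}\right)^{\frac{m+1}{2m}}\leq k^{\frac{1}{2m}}\left\Vert U\right\Vert,
\]
and setting $k=4^{m-1}$ gives $k^{\frac{1}{2m}}=2^{1-\frac{1}{m}}$.

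For the lower bound I would exhibit a form attaining this value, namely the strategic $m$--linear forms $S_{m}$ constructed in Section \ref{8nn}. Unwinding the recursive definition by induction, one sees that each $S_{m}$ is composed of exactly $4^{m-1}$ unimodular monomials and satisfies $\left\Vert S_{m}\right\Vert =2^{m-1}$ (the base cases $\left\Vert S_{2}\right\Vert=2,\ \left\Vert S_{3}\right\Vert=4,\ \left\Vert S_{4}\right\Vert=8$ are computed explicitly in Section \ref{8nn}, and the defining recursion multiplies both the monomial count by $4$ and the norm by $2$ at each step). Since all nonzero coefficients equal $\pm 1$, a direct computation gives
\[
\left(\sum_{j_{1},\cdots,j_{m}=1}^{\infty}\left\vert S_{m}(e_{j_{1}},\cdots,e_{j_{m}})\right\vert^{\frac{2m}{m+1}}\right)^{\frac{m+1}{2m}}=\left(4^{m-1}\right)^{\frac{m+1}{2m}}=2^{m-\frac{1}{m}},
\]
so that the ratio to $\left\Vert S_{m}\right\Vert$ equals $2^{m-\frac{1}{m}}/2^{m-1}=2^{1-\frac{1}{m}}$. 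As $S_{m}$ uses exactly $4^{m-1}$ monomials it is admissible for the restricted problem, forcing the restricted optimal constant to be at least $2^{1-\frac{1}{m}}$; together with the upper bound this proves the asserted equality. I expect the only points requiring genuine care to be the two inductive bookkeeping claims about $S_{m}$ --- that its monomial count is \emph{exactly} $4^{m-1}$ and that $\left\Vert S_{m}\right\Vert=2^{m-1}$ --- since everything else is a direct reading of Corollary \ref{7y} and a one-line simplification of the exponents.
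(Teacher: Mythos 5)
Your proof is correct and follows exactly the route the paper intends (it leaves the corollary as an ``immediate consequence'' of the proof of Theorem \ref{999}): the upper bound is the H\"older-plus-\eqref{9m} step, i.e.\ Corollary \ref{7y} with $K^{m}=4^{m-1}$ so that $\sqrt{K}=2^{1-\frac{1}{m}}$, and the lower bound is witnessed by the strategic forms $S_{m}$ of Section \ref{8nn}, which have exactly $4^{m-1}$ unimodular monomials and norm $2^{m-1}$. All your computations check out.
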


\bigskip

\begin{remark}
Initially, we note that in Lemma \ref{lemmaTech}, we have $\frac{2p}{p-2m+2}%
=2$ if and only if $p=\infty $. Interestingly enough, it is not possible to
attain the exponent $2$ when $p<\infty .$ In fact, if a universal estimate
as
\begin{equation}
\left( \sum_{j_{1},\cdots ,j_{m}=1}^{\infty }\left\vert T(e_{j_{1}},\cdots
,e_{j_{m}})\right\vert ^{\eta }\right) ^{\frac{1}{\eta }}\leq \left\Vert
T\right\Vert ,  \label{98p}
\end{equation}%
holds for some $\eta \geq 1$, then by plugging the $m$--linear from Lemma %
\ref{4444} into (\ref{98p}), one reaches
\begin{equation*}
1\geq \frac{\left( \sum_{j_{1},\cdots ,j_{m}=1}^{\infty }\left\vert
T(e_{j_{1}},\cdots ,e_{j_{m}})\right\vert ^{\eta }\right) ^{\frac{1}{\eta }}%
}{\left\Vert T\right\Vert }\geq \frac{\left( 4^{m-1}\right) ^{\frac{1}{\eta }%
}}{2^{m-2}\sup_{x\in \lbrack 0,1]}\frac{((1+x)^{p^{\ast }}+(1-x)^{p^{\ast
}})^{\frac{1}{p^{\ast }}}}{{(1+x^{p})^{1/p}}}},
\end{equation*}%
and thus%
\begin{equation*}
\eta \geq \frac{2\left( m-1\right) }{\log _{2}\left( \sup_{x\in \lbrack 0,1]}%
\frac{((1+x)^{p^{\ast }}+(1-x)^{p^{\ast }})^{\frac{1}{p^{\ast }}}}{{%
(1+x^{p})^{1/p}}}\right) +\left( m-2\right) }>2.
\end{equation*}
\end{remark}

\section{Why does interpolation seem not to be the optimal approach?\label{g}%
}

In this final section, we discuss the eventual (and, in our opinion, quite
likely) impossibility of finding sharp constants in Bohnenblust--Hille
inequalities by means of interpolation techniques, which ultimately suggests
that new technologies will be required to fully access such an optimality
problem.

The results established so far in this work gather evidences to support the
conjecture that the optimal constants in the Bohnenblust--Hille inequality
are uniformly bounded in $m$; they are likely to be precisely $2^{1-\frac{1}{%
m}}.$ We have noted that for $m=2$ (the case $m=1$ is obvious), the entropy
of the classical exponent of the Bohnenblust--Hille inequality coincides
with the entropy of the exponents of the mixed $\left( \ell _{1},\ell
_{2}\right) $--Littlewood inequalities. The entropy of the exponents of any
mixed $\left( \ell _{1},\ell _{2}\right) $--Littlewood inequalities grow
exponentially with $m$. Restricting to $m$-forms composed by the sum of $%
4^{m-1}$ monomials (and this is the number of monomials needed to attain the
optimal constants of any mixed $\left( \ell _{1},\ell _{2}\right) $%
--Littlewood inequalities) the optimal constants of the Bohnenblust--Hille
inequalities are $2^{1-\frac{1}{m}}.$ In this final section, we offer a
further technical argument which enforces such evidences, at the expenses,
however, of the eminence of interpolation techniques.

For the sake of fairness, we open the floor by emphasizing that
interpolation methods have promoted significant advances in the theory of
Bohnenblust--Hille inequalities and several recent developments indicate
that eventually they were a definitive (optimal) approach. By way of
example, we mention the radical improved of the original exponential upper
bounds to sublinear growth. That is, a celebrated result established by
means of interpolation methods, in \cite{bayart}, assures the existence of a
constant $\kappa $ such that
\begin{equation}
B_{\mathbb{R},m}<\kappa m^{\frac{2-\log 2-\gamma }{2}},  \label{87j}
\end{equation}%
where $\gamma $ is the Euler-Mascheroni constant. Hence, it gives a
sublinear growth as $m^{\frac{2-\log 2-\gamma }{2}}\approx m^{0.36482}$.
Nonetheless, despite several attempts, the best known lower bounds for $B_{%
\mathbb{R},m}$ are still $2^{1-\frac{1}{m}}.$ In what follows we will
establish an abstract formula for the upper bounds for $B_{\mathbb{K},m}$,
and hereafter in this section we consider both the real and complex fields,
which aim to call the attention on why the interpolation procedure used in
the proof of (\ref{87j}) is potentially not optimal.

Let is revisit the interpolative procedure used in \cite{a, bayart} in the
illustrative case $m=3$.%
\begin{eqnarray*}
&&\left( \sum_{j_{1},\cdots ,j_{3}=1}^{\infty }\left\vert
U(e_{j_{1}},e_{j_{2}},e_{j_{3}})\right\vert ^{\frac{3}{2}}\right) ^{2/3} \\
&\leq &\left[ \left( \sum_{j_{1},j_{2}=1}^{\infty }\left(
\sum_{j_{3}=1}^{\infty }\left\vert
U(e_{j_{1}},e_{j_{2}},e_{j_{3}})\right\vert ^{1}\right) ^{\frac{1}{1}\times
2}\right) ^{1/2}\right] ^{1/3} \\
&&\times \left[ \left( \sum_{j_{1}=1}^{\infty }\left( \sum_{j_{2}=1}^{\infty
}\left( \sum_{j_{3}=1}^{\infty }\left\vert
U(e_{j_{1}},e_{j_{2}},e_{j_{3}})\right\vert ^{2}\right) ^{\frac{1}{2}\times
1}\right) ^{\frac{1}{1}\times 2}\right) ^{\frac{1}{2}}\right] ^{1/3} \\
\times  &&\left[ \left( \sum_{j_{1}=1}^{\infty }\left(
\sum_{j_{2},j_{3}=1}^{\infty }\left\vert
U(e_{j_{1}},e_{j_{2}},e_{j_{3}})\right\vert ^{2}\right) ^{\frac{1}{2}\times
1}\right) ^{1/1}\right] ^{1/3}.
\end{eqnarray*}%
Hence, the optimal constant $B_{3}$ for the $3$--linear Bohnenblust--Hille
inequality satisfies%
\begin{equation}
B_{3}\leq \left( \sup_{ijk}\left\{
C_{(1,2,2)3}C_{(2,1,2)3}C_{(2,2,1)3}\right\} \right) ^{1/3}.  \label{sw}
\end{equation}%
If we use the multiple exponents $(\frac{4}{3},\frac{4}{3},2),(\frac{4}{3},2,%
\frac{4}{3})$ and $(2,\frac{4}{3},\frac{4}{3})$ we further obtain
\begin{equation}
B_{3}\leq \left( \sup_{ijk}\left\{ C_{(\frac{4}{3},\frac{4}{3},2)3}C_{(2,%
\frac{4}{3},\frac{4}{3})3}C_{(\frac{4}{3},2,\frac{4}{3})3}\right\} \right)
^{1/3},  \label{ou}
\end{equation}%
where the notation $\sup_{ijk}$ means that the supremum is taken over the
product of the constants, considering the same $3$--linear form when
estimating the constants and the order of the sums is always kept the same.
This is, by all possible means, a rather uniform {information, and the
estimate} (\ref{ou}) is, at least, as good as the best known estimate for
the constants of the $3$--linear Bohnenblust--Hille inequality, i.e., $%
2^{3/4}$.

It has been observed in \cite{a} that the interpolation of the mixed $\left(
\ell _{1},\ell _{2}\right) $--Littlewood inequalities is not an optimal
choice. In fact, using interpolation in this setting one simply gets $\left(
\sqrt{2}\right) ^{m-1}$ as the upper bounds for the $m$--linear
Bohnenblust--Hille constants, and this is far from optimal. On the other
hand, in \cite{bayart} it is noted that the interpolation of%
\begin{equation*}
\left( \frac{2\left( m-1\right) }{\left( m-1\right) +1},\cdots ,\frac{%
2\left( m-1\right) }{\left( m-1\right) +1},2\right) ,\cdots ,\left( 2,\frac{%
2\left( m-1\right) }{\left( m-1\right) +1},\cdots ,\frac{2\left( m-1\right)
}{\left( m-1\right) +1}\right)
\end{equation*}%
is much more effective, which ultimately gives sublinear upper bounds as (%
\ref{87j}). In what follows, however, we will remark that, insofar as
optimality of the constants are concerned, this is yet not the most
favorable procedure. Rather, the key point seems to rely on keeping the
uniformity when carrying interpolation procedures: the same $m$--linear form
be considered in the whole process, which obviously yields a potentially sub
optimal constant.

In a broad sense, this approach shows that the optimal Bohnenblust--Hille
constants for $m$--linear forms is%
\begin{equation}
B_{m}\leq \inf \left\{
\begin{array}{c}
\sup_{i_{1},\cdots,i_{m}}\left\{ \left( C_{(1,2,\cdots,2)m}\cdots
C_{(2,2,\cdots,2,1)m}\right) ^{1/m}\right\} ,\cdots, \\
\sup_{i_{1},\cdots,i_{m}}\left\{ \left( C_{\left( \frac{2\left( m-1\right) }{%
\left( m-1\right) +1},\cdots,\frac{2\left( m-1\right) }{\left( m-1\right) +1}%
,2\right) m}\cdots C_{\left( \frac{2\left( m-1\right) }{\left( m-1\right) +1}%
,\cdots,\frac{2\left( m-1\right) }{\left( m-1\right) +1},2\right) m}\right)
^{1/m}\right\}%
\end{array}%
\right\} ,  \label{om}
\end{equation}%
where, again, the supremum is taken over the product of the constants,
considering the same $m$--linear form when estimating the constants and,
moreover, the order of the sums is always the same. It turns out that this
new strategy is rather sensible to the $m$--linear form chosen to estimate
the optimal constants of the mixed $\left( \ell _{1},\ell _{2}\right) $%
--Littlewood inequalities, as performed in Section \ref{8nn}. In general we
should not expect that the same $m$--linear form optimizes all the extrema $%
m $--linear forms used in the interpolation process and this is why the
interpolation process seems to be sub-optimal. For reasons of symmetry, we
can choose the natural order of the sums $\sum\limits_{i_{1}}\cdots
\sum\limits_{i_{m}}$ in (\ref{om}).

It is obvious that the estimates given in (\ref{om}) are at least as good as
the best known estimates given in \cite{bayart}. While, it still seems hard
to give a definite step forward as to improve the upper estimates of the $m$%
--linear Bohnenblust--Hille constants, we believe that some computational
assisted simulations could accomplish some {advances. In fact, we conjecture
that the optimal constants for the $m$--linear Bohnenblust--Hille for real
scalars are precisely $2^{1-\frac{1}{m}}.$}

The effective computation of the above formulas seem to be somewhat puzzling
in view of the uniformity (the same multilinear form has to be chosen, and
the same order of the sums), and it is our belief that this is the reason
why the previous proofs of the Bohnenblust--Hille inequality could not
provide better constants.

The notion of universally extremum $m$--linear form is strongly connected to
what we have discussed in this section so far. We will say that a continuous
$m$--linear form $T_{ext}\colon c_{0}\times \cdots \times c_{0}\rightarrow
\mathbb{K}$ is universally extremum for the mixed $\left( \ell _{1},\ell
_{2}\right) $--Littlewood inequalities if
\begin{equation*}
\left\{
\begin{array}{c}
\sum_{j_{1}=1}^{\infty }\left( \sum_{j_{2},\cdots,j_{m}=1}^{\infty
}\left\vert T_{ext}(e_{j_{1}},\cdots,e_{j_{m}})\right\vert ^{2}\right) ^{%
\frac{1}{2}}\leq \left( \sqrt{2}\right) ^{m-1}\left\Vert U\right\Vert , \\
\vdots \\
\left( \sum_{j_{1},\cdots,j_{m-1}=1}^{\infty }\left( \sum_{j_{m}=1}^{\infty
}\left\vert T_{ext}(e_{j_{1}},\cdots,e_{j_{m}})\right\vert \right)
^{2}\right) ^{1/2}\leq \left( \sqrt{2}\right) ^{m-1}\left\Vert U\right\Vert .%
\end{array}%
\right.
\end{equation*}%
Note that the order of the sums in all $m$ inequalities {is} preserved. We
note that our approach to reach the optimal constants of the mixed $\left(
\ell _{1},\ell _{2}\right) $--Littlewood inequalities have shown that
different inequalities potentially need different extrema multilinear forms
and, moreover, the order of the sums is rather important for the
effectiveness of the computations. We are led to believe, henceforth, that
it is quite unlikely that there exists an universally extremum $m$--linear
form $T_{ext}$ for $m>2$ as we have defined. Of course the same definition
of universally extremum multilinear forms will be stated for the other
interpolation sequences
\begin{equation*}
\left( \left( \frac{2k}{k+1},\cdots,\frac{2k}{k+1},2,\cdots,2\right)
,\cdots,\left( 2,\cdots,2,\frac{2k}{k+1},\cdots,\frac{2k}{k+1}\right)
\right) .
\end{equation*}

After so many attention devoted to discuss the Bohnenblust--Hille inequality
we think that it worths to present, with a quite simple proof, a formally
stronger version of (\ref{u88}); the interesting case is the classical one,
so this is just a formally stronger result:

\begin{theorem}
\label{rec}Let $m$ be a positive integer. There is a constant $D_{m}^{%
\mathbb{K}}\geq 1$ such that%
\begin{equation}
\left( \sum\limits_{i_{1},\cdots,i_{m}=1}^{\infty }\left\vert
\prod\limits_{s=1}^{m}T_{s}\left( e_{i_{1}},\cdots,e_{i_{m}}\right)
\right\vert ^{\frac{2}{m+1}}\right) ^{\frac{m+1}{2m}}\leq D_{m}^{\mathbb{K}%
}\prod\limits_{s=1}^{m}\left\Vert T_{s}\right\Vert ^{\frac{1}{m}}
\label{u5r}
\end{equation}%
for all continuous $m$--linear forms $T_{s} \colon c_{0}\times \cdots \times
c_{0}\rightarrow \mathbb{K}$ and all $s=1,\cdots,m.$
\end{theorem}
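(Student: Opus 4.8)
The plan is to derive \eqref{u5r} as an essentially immediate consequence of the classical Bohnenblust--Hille inequality \eqref{u88}, via a single application of the generalized H\"{o}lder inequality; in fact the argument will show that one may take $D_{m}^{\mathbb{K}}=B_{\mathbb{K},m}$, so that \eqref{u5r} is indeed only \emph{formally} stronger than \eqref{u88}. The guiding observation is that the exponent $\tfrac{2}{m+1}$ appearing in \eqref{u5r} equals $\tfrac{1}{m}$ times the Bohnenblust--Hille exponent $q:=\tfrac{2m}{m+1}$; this is exactly the relation one needs in order to distribute a product of $m$ forms across $m$ conjugate indices.

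Writing $a^{(s)}_{i_1\cdots i_m}:=T_{s}(e_{i_1},\cdots,e_{i_m})$, I would first rewrite the summand as
\begin{equation*}
\left\vert \prod_{s=1}^{m} a^{(s)}_{i_1\cdots i_m}\right\vert^{\frac{2}{m+1}}
=\prod_{s=1}^{m}\left\vert a^{(s)}_{i_1\cdots i_m}\right\vert^{\frac{q}{m}}.
\end{equation*}
Next I would apply H\"{o}lder's inequality to the sum $\sum_{i_1,\cdots,i_m}\prod_{s=1}^{m}|a^{(s)}_{i_1\cdots i_m}|^{q/m}$ using the $m$ conjugate exponents all equal to $m$ (so that $\sum_{s=1}^{m}\tfrac{1}{m}=1$), which yields
\begin{equation*}
\sum_{i_1,\cdots,i_m=1}^{\infty}\left\vert \prod_{s=1}^{m} a^{(s)}_{i_1\cdots i_m}\right\vert^{\frac{2}{m+1}}
\leq \prod_{s=1}^{m}\left(\sum_{i_1,\cdots,i_m=1}^{\infty}\left\vert a^{(s)}_{i_1\cdots i_m}\right\vert^{q}\right)^{\frac{1}{m}}.
\end{equation*}

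Raising both sides to the power $\tfrac{m+1}{2m}$ and recalling $q=\tfrac{2m}{m+1}$, the right-hand side factors as $\prod_{s=1}^{m}\big[\big(\sum_{i_1,\cdots,i_m}|a^{(s)}_{i_1\cdots i_m}|^{q}\big)^{(m+1)/(2m)}\big]^{1/m}$, and I would then bound each inner factor by the classical Bohnenblust--Hille inequality \eqref{u88}, namely $\big(\sum_{i_1,\cdots,i_m}|a^{(s)}_{i_1\cdots i_m}|^{2m/(m+1)}\big)^{(m+1)/(2m)}\leq B_{\mathbb{K},m}\Vert T_{s}\Vert$. Collecting the $m$ factors produces $\prod_{s=1}^{m}(B_{\mathbb{K},m}\Vert T_{s}\Vert)^{1/m}=B_{\mathbb{K},m}\prod_{s=1}^{m}\Vert T_{s}\Vert^{1/m}$, establishing \eqref{u5r} with $D_{m}^{\mathbb{K}}=B_{\mathbb{K},m}$.

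There is, frankly, no serious obstacle here: the entire content is the matching of exponents $\tfrac{2}{m+1}=\tfrac{q}{m}$ together with the fact that $m$ copies of the exponent $m$ are H\"{o}lder-conjugate. The only point requiring a modicum of care is the bookkeeping of the nested exponents when the power $\tfrac{m+1}{2m}$ is distributed across the product; once $q$ is identified with the Bohnenblust--Hille exponent this is routine. It is worth noting that taking $T_{1}=\cdots=T_{m}=T$ reduces \eqref{u5r} to \eqref{u88} verbatim; this both confirms the computation and shows that the optimal constant in \eqref{u5r} is exactly $B_{\mathbb{K},m}$, since no value smaller than $B_{\mathbb{K},m}$ can possibly work.
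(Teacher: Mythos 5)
Your proof is correct, but it takes a genuinely different route from the paper's. You treat the multi-index $(i_1,\cdots,i_m)$ as a single index, factor the summand as $\prod_{s}|T_s(e_{i_1},\cdots,e_{i_m})|^{q/m}$ with $q=\tfrac{2m}{m+1}$, apply the ordinary scalar H\"older inequality with $m$ conjugate exponents all equal to $m$, and then invoke \eqref{u88} as a black box on each factor; the exponent bookkeeping ($\tfrac{2}{m+1}=\tfrac{q}{m}$ and $\tfrac{m+1}{2m^2}=\tfrac{1}{m}\cdot\tfrac{1}{q}$) checks out. The paper instead re-runs the Bohnenblust--Hille machinery from scratch: it splits $\tfrac{m+1}{2}=(m-1)\cdot\tfrac{m}{2(m-1)}+\tfrac12$ and applies the H\"older inequality for \emph{mixed} sums, so that each of the $m$ resulting mixed $\bigl(\ell_{2(m-1)/m},\cdots,\ell_{2(m-1)/m},\ell_{2}\bigr)$-norm factors involves a different $T_s$, and then controls each factor via the Khinchin inequality with optimal constants, exactly as in the Bayart--Pellegrino--Seoane argument; this yields for $D_m^{\mathbb{K}}$ the best known \emph{upper bounds} \eqref{km}--\eqref{kb}. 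Your argument is more elementary and in one respect stronger: together with the specialization $T_1=\cdots=T_m$ it identifies the optimal constant in \eqref{u5r} as exactly $B_{\mathbb{K},m}$, whence the sublinear bounds follow a fortiori. What it does not do is exhibit \eqref{u5r} as an instance of the interpolation scheme the section is analyzing, which is the paper's reason for proving it the long way; and it gives no explicit numerical constant independent of the (unknown) value of $B_{\mathbb{K},m}$, though since $B_{\mathbb{K},m}$ is dominated by the bounds in \eqref{km}--\eqref{kb} this costs nothing quantitatively.
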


In \eqref{u5r}, when $T_{1}=\cdots =T_{m}$, we recover the classical
Bohnenblust--Hille inequality; the proof keeps for $D_{m}^{\mathbb{K}}$ the
best known upper bounds for (\ref{u88}), i.e., there are constants $\kappa
_{1},\kappa _{2}$ such that
\begin{equation}
B_{\mathbb{R},m}<\kappa _{1}m^{\frac{2-\log 2-\gamma }{2}},  \label{km}
\end{equation}%
\begin{equation}
B_{\mathbb{C},m}<\kappa _{2}m^{\frac{1-\gamma }{2}}  \label{kb}
\end{equation}%
where $\gamma $ is the Euler-Mascheroni constant ($m^{\frac{2-\log 2-\gamma
}{2}}\approx m^{0.36482}$ and $m^{\frac{1-\gamma }{2}}\approx m^{0.21139}$).

\bigskip \textbf{Proof of Theorem \ref{rec}.}

Since%
\begin{equation*}
\frac{1}{\frac{2}{m+1}}=\frac{1}{\frac{2\left( m-1\right) }{\left(
m-1\right) +1}}+\overset{m-1\text{ times}}{\cdots }+\frac{1}{\frac{2\left(
m-1\right) }{\left( m-1\right) +1}}+\frac{1}{2},
\end{equation*}%
we can apply H\"{o}lder inequality for mixed exponents (see \cite[page 50]%
{fournier} or \cite{bene}) as to reach%
\begin{eqnarray*}
&&\left( \sum\limits_{i_{1},\cdots,i_{m}=1}^{\infty }\left\vert
\prod\limits_{s=1}^{m}T_{s}\left( e_{i_{1}},\cdots,e_{i_{m}}\right)
\right\vert ^{\frac{2}{m+1}}\right) ^{\frac{m+1}{2m}} \\
&=&\left[ \left( \sum\limits_{i_{1},\cdots,i_{m}=1}^{\infty }\left\vert
\prod\limits_{s=1}^{m}T_{s}\left( e_{i_{1}},\cdots,e_{i_{m}}\right)
\right\vert ^{\frac{2}{m+1}}\right) ^{\frac{m+1}{2}}\right] ^{\frac{1}{m}} \\
&\leq &\left(
\begin{array}{c}
\left[ \sum\limits_{i_{1}=1}^{\infty }\left(
\sum\limits_{i_{2},..,i_{m}=1}^{\infty }\left\vert T_{1}\left(
e_{i_{1}},\cdots,e_{i_{m}}\right) \right\vert ^{\frac{2\left( m-1\right) }{%
\left( m-1\right) +1}}\right) ^{\frac{\left( m-1\right) +1}{2\left(
m-1\right) }\times 2}\right] ^{\frac{1}{2}}\times \cdots \times \\
\times \cdots \times \left[ \sum\limits_{i_{1},\cdots,i_{m-1}=1}^{\infty
}\left( \sum\limits_{i_{m}=1}^{\infty }\left\vert T_{m}\left(
e_{i_{1}},\cdots,e_{i_{m}}\right) \right\vert ^{2}\right) ^{\frac{1}{2}%
\times \frac{2\left( m-1\right) }{\left( m-1\right) +1}}\right] ^{\frac{%
\left( m-1\right) +1}{2\left( m-1\right) }}%
\end{array}%
\right) ^{\frac{1}{m}}.
\end{eqnarray*}

In view of the Khinchin inequality and the optimal constants for the case of
Rademacher functions for real scalars and Steinhaus functions for complex
scalars (see \cite{haa, ko}) as in (\cite{bayart}) and (\ref{7j}), each one
of the $m$ products above are dominated by
\begin{equation*}
\left( \prod\limits_{j=2}^{m}\Gamma \left( 2-\frac{1}{j}\right) ^{\frac{j}{%
2-2j}}\right) \left\Vert T_{s}\right\Vert
\end{equation*}%
for complex scalars and%
\begin{equation*}
\left( 2^{\frac{446381}{55440}-\frac{m}{2}}\prod\limits_{j=14}^{m}\left(
\frac{\Gamma \left( \frac{3}{2}-\frac{1}{j}\right) }{\sqrt{\pi }}\right) ^{%
\frac{j}{2-2j}}\right) \left\Vert T_{s}\right\Vert
\end{equation*}%
for real scalars. The proof now follows as in (\cite{bayart}); the above
estimates yield (\ref{km}) and (\ref{kb}).

\bigskip We conclude the current manuscript with a list of guiding questions
which we believe are relevant to advance the understanding in the theory of
Bohnenblust--Hille inequalities.

\begin{problem}
\label{1}Is the asymptotic growth of $\ent_{m}\left( \frac{2m}{m+1},\cdots,%
\frac{2m}{m+1}\right) $ in fact exponential?
\end{problem}

\begin{problem}
Is there some intermediate (between the classical and extremals) exponent $%
\left( q_{1},\cdots,q_{m}\right) $ such that the optimal constants
associated have a polynomial growth? In other words, is there $\delta \in
(0,1)$ and $\left( q_{1},\cdots,q_{m}\right) $\ with%
\begin{equation*}
\mathrm{diam}\left( q_{1},\cdots,q_{m}\right) =\delta
\end{equation*}%
such that the optimal growth of the constants associated to $\left(
q_{1},\cdots,q_{m}\right) $ is polynomial?
\end{problem}

\begin{problem}
\label{2}Is it true that for $m>2$ there are no universally extremum $m$%
--linear forms for the mixed $\left( \ell _{1},\ell _{2}\right) $%
--Littlewood inequalities?
\end{problem}

\begin{problem}
\label{3}Is it true that for $m>2$ there are no universally extremum $m$%
--linear forms for
\begin{equation*}
\left( \frac{2\left( m-1\right) }{\left( m-1\right) +1},\cdots,\frac{2\left(
m-1\right) }{\left( m-1\right) +1},2\right) ,\cdots,\left( 2,\frac{2\left(
m-1\right) }{\left( m-1\right) +1},\cdots,\frac{2\left( m-1\right) }{\left(
m-1\right) +1}\right) ?
\end{equation*}
\end{problem}

A positive solution to Problem \ref{1} would imply that the sequence of
optimal constants of the $m$--linear Bohnenblust--Hille inequality is
bounded. Positive solutions to Problems \ref{2} and/or \ref{3} are
essentially a confirmation that the interpolation procedure is not optimal.

\subsection{Final comments}

It is evident that the Khinchin inequality is a key result in the proof of
the Bohnenblust--Hille inequality. As we have remarked in Section \ref{8zzz}%
, for $1\leq p<p_{0}$ the optimal constants are $A_{p}=2^{\frac{1}{p}-\frac{1%
}{2}}.$ It is quite simple to check that these estimates are attained when $%
\left( a_{j}\right) _{j=1}^{\infty }=\alpha \left( \pm e_{i}\pm e_{j}\right)
$ for some $\alpha \neq 0$ and $i\neq j.$ When $p=1$ a decisive fact used in
Section \ref{8zzz} is that the optimal estimates of the Khinchin inequality
are achieved if and only if $\left( a_{j}\right) _{j=1}^{\infty }=\alpha
\left( \pm e_{i}\pm e_{j}\right) $ (Szarek's result, see also \cite{De} for
a robust Khinchin Inequality with a qualitative approach to Szarek's result)$%
.$ This was one of the main ingredients to prove Proposition \ref{7vg}.
Although we have not found in the literature, we believe that Szarek's
result (and eventually a qualitative version of it) also holds for $%
p<p_{0}\cong 1.8474$ (we are using the notation of Section \ref{8zzz}).

\begin{conjecture}
\label{conj}For $p<p_{0}$ the optimal constants of the Khinchin inequality
are attained if and only if $\left( a_{j}\right) _{j=1}^{\infty }=\alpha
\left( \pm e_{i}\pm e_{j}\right) $ for some $\alpha \neq 0$ and $i\neq j.$
\end{conjecture}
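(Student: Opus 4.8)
The plan is to recast the conjecture as a sharp variational problem and to isolate its combinatorial core. Writing $r_j$ for the Rademacher functions as in \eqref{65} and setting
\[
F_p(a)=\int_0^1\Big|\sum_{j=1}^\infty r_j(t)a_j\Big|^p\,dt,
\]
the assertion that $A_p=2^{\frac12-\frac1p}$ for $p<p_0$ is exactly the statement $F_p(a)\ge 2^{\frac p2-1}$ whenever $\sum_j|a_j|^2=1$, and the conjecture is that equality forces $a=\alpha(\pm e_i\pm e_j)$. First I would reduce to the finite--dimensional normalised problem of minimising $F_p$ over the unit sphere of $\mathbb{R}^n$: a truncation and continuity argument allows one to take any extremal sequence finitely supported, and compactness yields a genuine minimiser $a=(a_1,\dots,a_n)$. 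Since $F_p$ is invariant under the sign flips $a_j\mapsto -a_j$ (the $r_j$ being symmetric) and under permutations, I may assume $a_1\ge a_2\ge\cdots\ge a_n\ge 0$.

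Next I would pass to the squared variables $b_j=a_j^2$ and study $\Phi(b)=F_p(a)$ on the simplex $\{b_j\ge0,\ \sum_j b_j=1\}$. The conjectured extrema $\alpha(\pm e_i\pm e_j)$ correspond precisely to the edge--midpoints $b=\tfrac12(e_i+e_j)$, where $\Phi=2^{\frac p2-1}$; direct evaluation gives $\Phi(e_i)=1$ at the vertices, while the centroid of a three--dimensional face already exceeds the edge value (e.g. at $p=1$ one computes $\Phi(\tfrac13,\tfrac13,\tfrac13)=\tfrac{\sqrt3}{2}>2^{-1/2}$). Thus the conjecture becomes the geometric assertion that, for $1\le p<p_0$ and uniformly in $n$, the global minimum of $\Phi$ on the whole simplex sits exactly at the edge--midpoints. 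The decisive structural step is a two--coordinate reduction: fixing all coordinates but two and writing $X=r_1(t)a_1+r_2(t)a_2+Y$ with $Y=\sum_{j\ge3}r_j(t)a_j$, one analyses
\[
G(s,t)=\int_0^1\tfrac12\big(|Y+s+t|^p+|Y+s-t|^p\big)\,dt
\]
on the circle $s^2+t^2=a_1^2+a_2^2$. If one can prove that, for $p<p_0$, any configuration spread over three or more coordinates is strictly improved by transferring mass onto two equal coordinates, then the minimiser must be an edge--midpoint, and an elementary one--variable computation (minimising $\tfrac12(|a_1+a_2|^p+|a_1-a_2|^p)$ on $a_1^2+a_2^2=1$) finishes both the classification and the equality case.

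The hard part --- and precisely the reason this is recorded as a conjecture rather than a theorem --- is the strictness of that concentration step across the full range $1\le p<p_0$. Convexity of $x\mapsto|x|^p$ for $p>1$ pushes in the unhelpful direction, so the mechanism that prefers two equal masses to any spread is delicate and genuinely feels the threshold $p_0$, where the edge value $2^{\frac p2-1}$ meets the Gaussian value $2^{p/2}\Gamma(\tfrac{p+1}{2})/\sqrt\pi$ attained in the maximally spread limit. Two routes seem worth pursuing. The first is perturbative: Szarek's theorem \cite{szarek} supplies the characterisation and its rigidity at $p=1$, and the robust Khinchin estimate \cite{De} a quantitative stability there, so one would try to propagate the result in $p$ using analyticity of $p\mapsto F_p$ together with a uniform second--variation (non--degeneracy) estimate at the edge--midpoints, ruling out the bifurcation of any competing minimiser before $p$ reaches $p_0$. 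The second is direct: establish a sharp two--point inequality for the auxiliary functional $G(s,t)$ above, valid for every admissible background $Y$ and every $p<p_0$, forcing concentration one coordinate at a time. I expect the uniform control of $G$ over all distributions of $Y$ to be the true obstruction, since it is exactly there that the distinguished role of $p_0$ must be exploited.
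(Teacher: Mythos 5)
The statement you were asked to address is recorded in the paper as Conjecture \ref{conj}; the authors offer no proof of it (they explicitly say they could not find the result in the literature and merely \emph{believe} it holds, citing Szarek's theorem for the endpoint $p=1$ and the robust Khinchin inequality of De--Diakonikolas--Servedio as supporting evidence). So there is no ``paper's own proof'' to compare against, and your proposal should be judged on whether it actually establishes the claim. It does not, and you say so yourself: the entire content of the conjecture is concentrated in the step you flag as open, namely that for every $1\le p<p_0$ any configuration supported on three or more coordinates is \emph{strictly} improved by moving mass onto two equal coordinates, uniformly over the background variable $Y$. Everything you do prove is correct --- the normalisation $F_p(a)\ge 2^{\frac p2-1}$ on the unit sphere, the values $\Phi=2^{\frac p2-1}$ at edge--midpoints, $\Phi(e_i)=1$ at vertices, $\Phi(\tfrac13,\tfrac13,\tfrac13)=\tfrac{\sqrt3}{2}$ at $p=1$, and the identification of $p_0$ as the crossing point with the Gaussian value $2^{p/2}\Gamma(\tfrac{p+1}{2})/\sqrt\pi$ --- but these are sanity checks on the candidate extremals, not a proof of extremality or of the rigidity (``only if'') direction. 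Two further points deserve caution: your compactness reduction quietly assumes that an extremizer exists and is finitely supported, which is delicate precisely because for $p>p_0$ the infimum over all $n$ is the (unattained) Gaussian limit, so attainment for $p<p_0$ is part of what must be shown; and the perturbative route from $p=1$ needs a non-degeneracy estimate at the edge--midpoints that is uniform in the dimension $n$, since competing near-minimisers can escape to configurations with unboundedly many coordinates as $p\uparrow p_0$. In short: your framing is sound and consistent with the paper's expectations, but the proposal is a research programme, not a proof, and the conjecture remains open after it.
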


Up to now, the best upper bound for $B_{\mathbb{R},3}$ is $2^{3/4}.$
Following the lines of the proof of Proposition \ref{7vg}, and supposing the
above conjecture verified, we can prove that if $2^{3/4}$ is in fact sharp
and
\begin{equation*}
T(x,y,z)=\sum\limits_{i,j,k=1}^{\infty }a_{ijk}x_{i}y_{j}z_{k}
\end{equation*}%
is an extremum, then%
\begin{equation*}
\comp\left( T\right) =2.
\end{equation*}%
Furthermore, a condition analogous to (\ref{8h2}) and (\ref{8h3}) is
verified in this context. As we have mentioned before, we believe that $%
2^{3/4}$ is not sharp and there are not extrema as described above. Similar
considerations hold for $3<m\leq 13$ because $\frac{2\left( m-1\right) }{%
\left( m-1\right) +1}<p_{0}$ and the optimal constants of the Khinchin
inequality still behave as in the case $m=2.$ We stress that if we show that the in fact $%
2^{3/4}$ is not the optimal constant for $m=3$, it is immediate that the other best known values for $m>3$ are also not optimal.

\bigskip


\begin{thebibliography}{99}
\bibitem{fournier} R. Adams, J.J. Fournier, Sobolev spaces, Elsevier, 2003.

\bibitem{a} N. Albuquerque, F. Bayart, D. Pellegrino, J. Seoane-Sep\'{u}%
lveda, Sharp generalizations of the multilinear Bohnenblust--Hille
inequality, J. Funct. Anal. \textbf{266} (2014), 3726--3740.

\bibitem{araujo} G. Araujo, D. Pellegrino, D. Diniz P. da Silva e Silva, On
the upper bounds for the constants of the Hardy--Littlewood inequality, J.
Funct. Anal. \textbf{267} (2014), 1878--1888.

\bibitem{bayart} F. Bayart, D. Pellegrino, J. Seoane-Sepulveda, The Bohr
radius of the $n$-dimensional polydisk is equivalent to $\sqrt{\frac{\log n}{%
n}},$ Adv. Math. \textbf{264} (2014), 726--746.

\bibitem{boas} H.P. Boas, The football player and the infinite series,
Notices Amer. Math. Soc. \textbf{44} (1997), 1430--1435.

\bibitem{bene} A. Benedek, R. Panzone, The space $L^{p}$, with mixed norm,
Duke Math. J. \textbf{28} (1961), 301--324.

\bibitem{bh} H. F. Bohnenblust and E. Hille, On the absolute convergence of
Dirichlet series, Ann. of Math. \textbf{32} (1931), 600--622.

\bibitem{bbb} G. Botelho, D.\ Pellegrino, When every multilinear mapping is
multiple summing. Math. Nachr. \textbf{282} (2009), no. 10, 1414--1422.

\bibitem{campos} J. Campos, W. Cavalcante, V. F\'{a}varo, D. Nu\~{n}ez-Alarc%
\'{o}n, D. Pellegrino, D.M. Serrano-Rodr\'{\i}guez , Polynomial and
multilinear Hardy--Littlewood inequalities: analytical and numerical
approaches, arXiv:1503.00618.

\bibitem{ccc} W. Cavalcante, D. Pellegrino, Geometry of the closed unit ball
of the space of bilinear forms on $\ell _{p}$, \qquad arXiv:1603.01535
[math.FA].

\bibitem{De} A. De, I. Diakonikolas, R.A. Servedio, A robust Khintchine
inequality, and algorithms for computing optimal constants in Fourier
analysis and high-dimensional geometry. SIAM J. Discrete Math. 30 (2016),
no. 2, 1058--1094.

\bibitem{defseip} A. Defant, L. Frerick, J. Ortega-Cerd\`{a}, M. Ouna\"{\i}%
es, K. Seip, The Bohnenblust-Hille inequality for homogeneous polynomials is
hypercontractive. Ann. of Math. (2) \textbf{174} (2011), no. 1, 485--497.

\bibitem{diestel} J. Diestel, H. Jarchow, A. Tonge, Absolutely summing
operators, cambridge stusies in advances mathematics 43, 2005.

\bibitem{dimant} V. Dimant and P. Sevilla--Peris, Summation of coefficients
of polynomials on $\ell _{p}$ spaces, arXiv:1309.6063v1 [math.FA], to appear
in Publ. Math.

\bibitem{haa} U. Haagerup, The best constants in the Khintchine inequality,
Studia Math. \textbf{70} (1981), 231--283.

\bibitem{hardy} G. Hardy and J. E. Littlewood, Bilinear forms bounded in
space $[p,q]$, Quart. J. Math. \textbf{5} (1934), 241--254.

\bibitem{ko} H. K\"{o}nig, On the best constants in the Khintchine
inequality for Steinhaus variables, Israel Math. Journal \textbf{203}
(2014), 23--57

\bibitem{LLL} J. E. Littlewood, On bounded bilinear forms in an infinite
number of variables, Quart. J. (Oxford Ser.) \textbf{1} (1930), 164--174.

\bibitem{mali} L. Maligranda, N. Sabourova, On Clarkson's inequality in the
real case. Math. Nachr, \textbf{280} (2007), 1363--1375.

\bibitem{montanaro} A. Montanaro, Some applications of hypercontractive
inequalities in quantum information theory. J. Math. Phys. \textbf{53}
(2012), no. 12, 122206, 15 pp.

\bibitem{daniel} D. Pellegrino, The optimal constants of the mixed $(\ell
_{1},\ell _{2})$--Littlewood inequality, J. Number Theory \textbf{160}
(2016), 11--18.

\bibitem{dia} D. Pellegrino and D.M. Serrano-Rodr\'{\i}guez, On the mixed $%
(\ell _{1},\ell _{2})$--Littlewood inequality for real scalars and
applications, arXiv:1510.00909v1 [math.FA].

\bibitem{pra} T. Praciano--Pereira, On bounded multilinear forms on a class
of $\ell _{p}$ spaces, J. Math. Anal. Appl. \textbf{81} (1981), 561--568.

\bibitem{seip} E. Saksman, K. Seip, Some open questions in analysis for
Dirichlet series, To appear in the proceedings volume for the conference
"Completeness Problems, Carleson Measures, and Spaces of Analytic Functions"
held at the Mittag--Leffler Institute in 2015.

\bibitem{szarek} J. Szarek, On the best constants in the Khinchin
inequality. Studia Math. \textbf{58} (1976), no. 2, 197--208.
\end{thebibliography}
\end{document}